

                                                          \documentclass[9pt,twocolumn,technote,twoside]{IEEEtran}
\usepackage{subfigure}
\usepackage{graphicx}
\usepackage{graphics} 
\usepackage{epsfig} 
\usepackage{mathptmx} 
\usepackage{times} 
\usepackage{amsmath} 
\usepackage{amssymb}  
\usepackage{amsthm}
\usepackage{dsfont}
\usepackage{amssymb} 
\usepackage{color}

\everymath{\displaystyle}

\everymath{\displaystyle}
\bibliographystyle{IEEEtran}
\newcommand{\beqnum}{\begin{equation}\begin{array}{lcl}}
\newcommand{\eeqnum}{\end{array}\end{equation}}
\newcommand{\beqnom}{\begin{eqnarray}}
\newcommand{\eeqnom}{\end{eqnarray}}
\newcommand{\beqnc}{\begin{center}\begin{eqnarray}}
\newcommand{\eeqnc}{\end{eqnarray}\end{center}}
\newcommand{\beqnlm}{\begin{equation}\vspace{-.5cm}\begin{array}{lll}}
\newcommand{\eeqnlm}{\end{array}\end{equation}}\vspace{-.5cm}
\newcommand{\beq}{\begin{eqnarray*}}
\newcommand{\eeq}{\end{eqnarray*}}
\newcommand{\bef}{\begin{figure}[tbh!]}
\newcommand{\enf}{\end{figure}}

\newcommand{\lva}{\left\lfloor}
\newcommand{\rva}{\right\rceil}
\newcommand{\vep}{\varepsilon}

\newcommand{\R}{\mathbb{R}}

\newtheorem{montheo}{\bf Theorem}
\newtheorem{rem}{\bf Remark}
\newtheorem{lemme}{\bf Lemma}
\newtheorem{proposition}{\bf Proposition}

\newtheorem{madef}{\bf Definition}
\newtheorem{notations}{\bf Notation}

\begin{document}
\title{Barrier Function-Based Adaptive Continuous Higher-Order Sliding Mode Controllers with Unbounded Perturbations}
\author{Yacine Chitour, Hussein Obeid, Salah Laghrouche,  and  Leonid Fridman
\thanks{Y. Chitour is with the Laboratoire des Signaux et Syst\`{e}mes, Universit\'{e} Paris-Saclay, Gif-sur-Yvette, Paris, France (e.mail:yacine.chitour@l2s.centralesupelec.fr).}
\thanks{H. OBEID is with LUSAC Laboratory, University of Caen Normandy, 14032 Caen, France (e.mail: hussein.obeid@unicaen.fr).}
\thanks{S. Laghrouche is with Femto-ST UMR CNRS, University of Bourgogne Franche-Comt\'{e}/UTBM, 90010, Belfort, France (e.mail: salah.laghrouche@utbm.fr).}
\thanks{L. Fridman is with the Departement of Robotics and Control, Engineering Faculty, Universidad Nacional Aut\'{o}noma de M\'{e}xico (UNAM), D.F 04510. M\'{e}xico (e.mail: lfridman@unam.mx).}
}

\date{}


\maketitle
\begin{abstract}
\noindent 
In this paper, two classes of continuous higher order adaptive sliding mode controllers based on barrier functions are developed for a perturbed chain of integrators with unbounded perturbations. Both classes provide finite-time convergence of system states to a predefined domain using a continuous control signal. The first class of adaptive controllers does not require any assumptions about the perturbation; however, it can provide unbounded control gains. To ensure bounded control gains, and in the case of a Lipschitz perturbation, a second class of adaptive controllers, called the adaptive higher order  Super-Twisting (HOST) algorithm, is developed. 
\end{abstract}






\section{Introduction}

Sliding Mode Control is an efficient tool for matched bonded uncertainties compensation. Higher Order Sliding Mode Controllers (HOSMC) (\cite{Levant2001,levant2003higher,levant2005homogeneity,CRUZZAVALA2017232,ding2016simple,zamora2013control,chalanga2015new,edwards2016adaptive,EDWARDS2016183,kamal2015new,CHL15,CRUZZAVALA2017232,
moreno2018discontinuous,mercado2020discontinuous,cruz2020higher,seeber2021integral,mercado2021multiple}) allow collapsing in a finite time for the dynamics of the flat single input single output of order $r.$  The homogeneity properties of discontinuous HOSMCs ensure the $r$th order of asymptotic precision of the output with respect to the sampling step and parasitic dynamics (\cite{levant2010chattering,10.1002/rnc.4347,boiko2008discontinuous}).

Two types of HOSMCs have been are designed to compensate the perturbation in the chain of integrators of $r$th order. Discontinuous HOSMCs (DHOSMCs) have been introduced in the early 2000's (\cite{Levant2001,levant2003higher,levant2005homogeneity}) and are under active investigation (\cite{CRUZZAVALA2017232,ding2016simple}). The features of DHOSMCs are: (i) ability to compensate bounded matched perturbations theoretically exactly in finite-time;
(ii) ensuring a finite-time convergence to zero not only for the input but also for its derivatives till the order $(r -1)$ ($r$th order sliding set);
(iii) discontinuity of the control signal. For $r$th order perturbed chain of integrators governed by DHOSMCs, the homogeneity weight of output is $r$ and  $r-k, k=1,2,...,r-1$ for its $k$th derivatives. To implement DHOSMCs, the knowledge of perturbations upper bounds is needed.

Continuous HOSMCs (CHOSMCs) were proposed ten years later (\cite{zamora2013control,chalanga2015new,kamal2015new,edwards2016adaptive,CHL15,CRUZZAVALA2017232,
moreno2018discontinuous,mercado2020discontinuous,cruz2020higher,seeber2021integral,mercado2021multiple}). The main characteristics of CHOSMCs are: ability to compensate {\it Lipschitz} matched perturbations providing {\it {continuous}} control signal and theoretically exact finite time convergence to zero not only for the input but also for its derivatives till the order $r$ (($r+1$)th order sliding set). For $r$th order perturbed chain of integrators governed by CHOSM the homogeneity weight of output is $r+1$ and  $r-k+1, k=1,2,...,r$ for its $k$th derivatives.  To use CHOSMCs, the knowledge of the upper bound of
perturbations derivatives is also needed. A main problem in the use of CHOSMCs is the difficulty to estimate that upper bound. For example, to design a CHOSMC, the upper bound of the jerk is needed to be estimated, and, consequently, adaptation of CHOSMC gains is required.  

The homogeneity properties of HOSMCs ensure the asymptotic precision of the output and its derivatives with respect to the sampling step and parasitic dynamics (\cite{levant2010chattering,10.1002/rnc.4347,boiko2008discontinuous}). On the other hand, the amplitude of oscillations caused by non-idealities and energy consumption needed to maintain the system in real sliding mode (\cite{levant2010chattering,10.1002/rnc.4347}) depends on the upper bounds of the perturbations and their derivatives. Usually, such upper bounds are unknown or overestimated. That is why the papers devoted to HOSMCs adaptation (\cite{Bartolini_IMA2013,edwards2016adaptive,EDWARDS2016183,doi:10.1002/rnc.4105,doi:10.1002/rnc.4253,Mor2016ACTA,
OBEID2018540,OBEID2019STC,obeid2021dual,
rodrigues2021adaptive,Plestan2010,Plestan2012,yan2016adaptive,Shtessel}) should solve two contradictory problems simultaneously:

\begin{itemize}
  \item ensure finite-time convergence to the origin,
  \item Guarantee that the control signal follows the value of perturbation with opposite sign without overestimation. 
\end{itemize}

The main approaches to adaptation for sliding mode controllers can be split into three directions:

(a) the use of a filtered value of equivalent control
(\cite{edwards2016adaptive,EDWARDS2016183,Oliveira2018,utkin2013adaptive});

(b) increasing the gains (see, for example, \cite{ Mor2016ACTA});

(c) increasing and decreasing the gains (\cite{Bartolini_IMA2013,Plestan2010,Plestan2012}).\\

The approach (a) proposes to use the filtered value of the equivalent control as an estimation of the disturbance. The latter consists of increasing the gain to force the sliding motions. Once the sliding mode is achieved, the high frequency control signal is low pass filtered and used as information about the disturbance in the controller gain. The resulting sliding mode controllers gain is the sum of filtered signals and some constant to compensate possible errors between real disturbances and its value estimated by the low-pass filter. However, the algorithm (\cite{utkin2013adaptive}) requires the knowledge of the minimum and maximum allowed values of the adaptive gain; hence, it requires the information of the upper bound of disturbance derivatives. On the other hand, even if the other algorithms (\cite{edwards2016adaptive,EDWARDS2016183,rodrigues2021adaptive}) theoretically do not require the information of the disturbance's derivatives, however, in practice, the use of a low-pass filter requires implicitly information about this upper bound in order to choose the filter time constant. With this information CHOSMCs (\cite{CHL15,CRUZZAVALA2017232,moreno2018discontinuous,mercado2020discontinuous,cruz2020higher,seeber2021integral,mercado2021multiple}) can be applied and the adaptation will not be needed. 

The approach (b) consists of increasing the gain till the value ensuring that sliding motions are reached, then the gain is fixed at this value, ensuring an ideal sliding mode for some interval. This second strategy has two main disadvantages: (1) the gain does not decrease, i.e. it will not follow the disturbance when it is possibly decreasing and, correspondingly, it will cause big energy losses; (2) one cannot be sure that the sliding mode will never be lost because it cannot be assured that the disturbance will not grow anymore. 

To overcome the first of these disadvantages, the approach (c) was developed:  the sliding mode controller gain increases till the value ensuring that sliding motions are achieved, and then they decrease till the moment, when the sliding motion lost. Then, the procedure is repeated. So, the approach (c) ensures the solution of the perturbed system will be uniformly bounded but the size of the upper-bound cannot be predefined a priori because it depends on unknown upper bound of perturbations.

To reach the finite-time convergence to a prescribed vicinity of the sliding set, a barrier function (BF) based-adaptive strategy is proposed in \cite{OBEID2018540,OBEID2019STC}. Recently, this strategy has been applied to design an adaptive DHOSMC for a perturbed chain of integrators of order $r$ \cite{laghrouche2021barrier}. This algorithm ensures the finite-time convergence of the states to a predefined decreasing domain which tends to zero as $t$ tends to infinity. However, this algorithm gives rise to chattering when the sliding modes appear, moreover, it requires the assumption that the perturbation is bounded.  




In this paper, BF adaptation methodologies are extended to the classes of perturbed chain of integrators with unknown control gains and unbounded perturbations. Two different classes of continuous BF adaptation-based control algorithms are considered for two different classes of unbounded perturbations.

 The first class can be viewed as a generalization of the adaptive algorithm presented in \cite{laghrouche2021barrier}. Indeed, it ensures the same objective as before with the advantage that it can be applied for a more general case (unbounded perturbation), i.e. fewer restrictions on the bound of perturbation. Furthermore, it provides a continuous control signal (which is not the case in \cite{laghrouche2021barrier}). However, to maintain the above convergence with an unbounded disturbance, the control gain can tend to infinity.

To overcome this limitation, a second class of adaptive algorithms is developed for a perturbed chain of integrators of length $r$ with Lipschitz perturbation. For such a case, we propose the BF-based adaptation of the Higher-Order Super-Twisting (HOST) algorithm presented in \cite{HLC2017}. The main feature of 
this algorithm is that it ensures the finite-time convergence of the states to a predefined vicinity of the origin with a bounded control gain rather than an unbounded one. It is important to stress that the generalization of adaptive Super-Twisting for an arbitrary order is still an open problem. Indeed, the exciting adaptive ST algorithms are limited to order 1 and 2.  \cite{OBEID2019STC,Shtessel,edwards2016adaptive}. 



The paper is organized as follows. In Section~\ref{sec:sec2}, the problem statement is presented. In Section~\ref{sec:sec3}, some preliminary results are given. The first class of adaptive continuous HOSM controllers is designed in Section~\ref{sec:case1}. Then, the adaptation of the HOST algorithm handling the second class, is presented in Section~\ref{sec:case2}. Finally, some concluding remarks are given in Section~\ref{sec:conclusion}.

\section{Problem statement} \label{sec:sec2}
Consider the perturbed chain of integrators  given by
\begin{equation}\begin{array}{lcl}\label{nonlinear1}
\dot z_i=z_{i+1},~ i=1,...,{r-1},\quad \dot z_r =\gamma(t)u(t)+\phi(t),
\end{array}\end{equation}
where $r$ is a positive integer, $z=[z_1\  z_2\ ... z_r ]^T\in\mathbb{R}^r$ is the state vector, $u\in\mathbb{R}$ represents the scalar control variable and the functions $\gamma$ and $\phi$ are unknown measurable functions on $\mathbb{R}_+$.

The control objective is to design a continuous higher order sliding mode (HOSM) controllers to force the trajectory $z(\cdot)$ of System (\ref{nonlinear1}) to stay in an arbitrarily predefined neighborhood of zero once it reaches it, for the following two sets of conditions on the functions $\gamma$ and $\phi$:

 \begin{description}
\item[{\bf Case $1$:}]
\begin{equation}\label{bound-G}
0 < \gamma_m\le \gamma(t), \quad
\left| \phi(t) \right| \le \phi_M\tilde{\phi}(t),\hbox{ for a.e. }t\geq 0,
\end{equation}
\end{description}
where $\tilde{\phi}$  is a {\bf known} continuous function defined on $\R_+$, 
which is non-decreasing and lower bounded by $1$, and $\gamma_m$ and $
\phi_M$ are {\bf unknown} positive constants;
\begin{description}
\item[{\bf Case $2$:}]
\begin{equation}
\begin{array}{lcl}\label{bound-dot-psi}
\hspace{-1cm}\gamma\equiv \gamma_m, \quad \left| \dot \psi(t) \right| \le \psi_M,\hbox{ for a.e. }t\geq 0,
\hbox{ where }\psi:=\frac{\phi}{\gamma},
\end{array}
\end{equation}
\end{description}
with $\gamma_m$ and $\psi_M$ are {\bf unknown} positive constants. In both cases, the feedback controller we propose is of the form $L(t,z)u_r(z)$, where the gain function 
$L(t,z)$ is continuous and trajectory-dependent and the feedback function $u_r(\cdot)$
is a feedback arising from the unperturbed case, i.e. $\gamma\equiv \gamma_m$ and $\psi\equiv 0$. Even though Case $2$ is a subclass of Case $1$, we will design for it a 
feedback controller with bounded gain $L$, different from the feedback controller 
provided on Case $1$, which, in general, exhibits an unbounded gain $L$.






 To solve these two problems, the concept of barrier function will be used to design adaptive 
 continuous HOSM controllers ensuring the finite-time convergence of solutions to a desired 
 vicinity of origin for each case.

\begin{rem}
In the literature of discontinuous HOSM algorithms, the functions $\gamma$ and $\phi$ satisfy the following standard assumption \cite{levant2003higher,ding2016simple,CRUZZAVALA2017232,laghrouche2021barrier}
\beqnum\label{H1}
\quad 0<\gamma_m\leq \gamma(t)\leq\gamma_M,\quad  \left| \phi(t) \right| \le \phi_M,\hbox{ for a.e. }t\geq 0,
\eeqnum
with $\gamma_m$, $\gamma_m$ and $\phi_M$ are positive constants. However, in Case 1 the upper bound of $\phi(t)$ is a non-decreasing function rather than a positive constant and no upper bound of $\gamma$ is needed. Consequently, Case 1 can be viewed as a generalization of the standard assumption \eqref{H1}. 
\end{rem}

\begin{rem}
Note that Case 2 is similar to the conventional assumption given in the literature of continuous HOSM algorithms \cite{CHL15}. Indeed, the only difference is that the function $\gamma$ is supposed to be constant rather than a positive function. 
\end{rem}


\section{Preliminaries} \label{sec:sec3}


\begin{madef}\label{homogeneity} ({\bf Homogeneity})
If $r$ is a positive integer, a function $f:\mathbb{R}^r\rightarrow \mathbb{R}$ (respectively a vector field $F$ on $\mathbb{R}^r$) is said to be homogeneous of degree $q\in\mathbb{R}$ with respect to the family of dilations $(\delta_\varepsilon(z))_{\varepsilon>0}$, defined by
$$
\delta_\varepsilon(z)=(z_1,\cdots,z_r)\mapsto (\varepsilon^{p_1}z_1,\cdots,\varepsilon^{p_r}z_r),
$$
where $p_1,\cdots,p_r$ are positive real numbers (the weights), if for every $\varepsilon>0$ and $z\in\mathbb{R}^r$, one has $f(\delta_\varepsilon(z))=\varepsilon^qf(z)$  
(respectively  $\big(F(\delta_\varepsilon(z))=\varepsilon^q\delta_\varepsilon(F(z))\big)$).
\end{madef}

\noindent{\bf Notations:} We define the function $ \hbox{sgn}$ as the multivalued function defined on $\mathbb{R}$ by $ \hbox{sgn}(x)=\frac x{\vert x\vert}$ for $x\neq 0$ and $ \hbox{sgn}(0)=[-1,1]$. Similarly, for every $a\geq 0$ and $x\in \mathbb{R}$, we use $\left\lfloor x\right\rceil^a$ to denote $\left| x \right|^a  \hbox{sgn}(x)$. Note that $\lfloor \cdot\rceil^a$ is a  continuous function for $a>0$ and of class $C^1$ with derivative equal to $a\left| \cdot \right|^{a-1}$ for $a\geq 1$.  If $V:\R^r\rightarrow \R$ is a differentiable mapping, we use $\partial_jV$ and $\nabla V$ to denote the partial derivative of $V$ with respect to the $j$-th coordinate $z_j$ and the gradient vector of $V$, i.e. $\nabla V=(\partial_jV)_{1\leq j\leq r}$.
In the paper $\langle\cdot,\cdot\rangle$ denotes the standard inner product in $\mathbb{R}^r$.

We use $(e_i)_{1\leq i\leq r}$ to denote the canonical basis of $\R^r$ and $J_r$ the $r$-th Jordan block, i.e. the $r\times r$ matrix verifying $J_re_i=e_{i-1}$ for $1\leq i\leq r$ with the convention that $e_0=0$.
\\

We next provide the definition of a {\it pure chain of integrators of order $r$}.\begin{madef}\label{def0} {\it Let $r\geq 2$ be a positive integer. The $r$-th order pure chain of integrator
$(CI)_r$ is the single-input control system given by 
\beqnum \label{pure_int}
(CI)_r\ \ \ \dot z(t)=J_rz(t)+u(t) e_r,
\eeqnum
with $z=(z_1,\cdots,z_r)^T\in\mathbb{R}^r$ and $u\in \mathbb{R}$.
}
\end{madef}
We also give below the definition of an {\it adapted homogeneous feedback pair} for a pure chain of integrators of order $r$. 
\begin{madef}\label{def1}
{\it We say that $(V,u_r)$ with $V:\R\to\R_+$ and $u_r:\R^r\to\R$ is an adapted homogeneous feedback pair (AHFP) for a pure chain of integrators of order $r$ if there exist $\kappa\in (-1,0)$ and $p\in (0,2)$ such that the following items hold true.
\begin{description}
\item[$(i)$] if $p_i=p+(i-1)\kappa$ for $1\leq i\leq r+1$, then $p_{r+1}>0$;
\item[$(ii)$] $u_r$ is continuous with $u_r(0)=0$ and homogeneous of degree $p_{r+1}$ with respect to $(\delta_\vep)_{\vep>0}$;
\item[$(iii)$] $V$ is a $C^1$ strict Lyapunov function along the 
pure chain of integrators of order $r$ and homogeneous of degree $2$ with 
respect to $(\delta_\vep)_{\vep>0}$ such that there exists a continuous 
function $\rho:\R^r\to\R$ bounded from below and from above by two 
positive constants $c_r$ and $d_r$ for which the time derivative of 
$V$ along non-trivial trajectories of $\dot z=J_rz+u_r(z)e_r$ verifies 
\beqnum \label{est-V1}
\dot V(z) =-\rho(z)V(z)^{1+\frac{\kappa}2};
\eeqnum
\item[$(iv)$] the function $z\mapsto u_r(z)\partial_r V$ is continuous,
non-positive over $\mathbb{R}^i$.
\end{description}
If Item $(iv)$ is replaced by the stronger requirement that 
$u_r=-l_r\lva \partial_r V\rva^{\gamma_r}$, 
for some $\gamma_r>0$,
the homogeneous feedback pair $(V,u_r)$ is said to be strongly adapted (sAHFP).
}
\end{madef}

\begin{rem}\label{rem:imme0}
It has first to be noted that (sAHFP) for a pure chain of integrators of order 
$r$ exist, according to the fundamental work \cite{Hong}, see also \cite{HLC2017} for 
other examples of (AHFP). In particular, $\gamma_r=\frac{p_{r+1}}{2-p_r}$. 
Moreover, since the exponent of $V$ in the right-hand side of \eqref{est-V1} 
is less than one, it follows the stabilization of a pure chain of integrators with 
an (AHFP) occurs in finite time. Finally, the integer $2$ in the definition can 
be replaced by any positive real number $q$ with appropriate modifications 
in subsequent inequalities. 
\end{rem}

We collect in the following lemma basic and useful facts about a (sAHFP).
\begin{lemme}\label{lem:basic1}
With the notations of Definition~\ref{def1}, it is immediate to get that 
\begin{description}
\item[$(a)$] the vector fields $z\mapsto J_rz$ and $z\mapsto u_r(z)e_r$ are both homogeneous of degree $\kappa$ with respect to $(\delta_\vep)_{\vep>0}$;
\item[$(b)$] the function $\langle \nabla V(z),J_rz+u_r(z)e_r\rangle$ is homogeneous  with respect to $(\delta_\vep)_{\vep>0}$ of degree $2+\kappa$. As a consequence, there exists $c_u>0$ such that 
\begin{equation}\label{eq:ur-pVr-Z}
\vert u_r(z)\partial_rV(z)\vert\leq c_u V^{1+\frac{\kappa}2},\quad \forall z\in\R^r.
\end{equation}
\item[$(c)$] for every $\varepsilon>0$ and $z\in\R^r$, it holds
\begin{equation}\label{eq:grad}
\delta_\varepsilon\big(\nabla V(\delta_\varepsilon(z))\big)=\varepsilon^2\nabla V(z);
\end{equation}
\item[$(d)$] Set $D_p=\mathrm{diag}(p_1,\cdots,p_r)$. Then it holds, for every 
$z\in\R^r$,
\begin{equation}\label{eq:grad}
\langle \nabla V(z),D_pz\rangle=2V(z).
\end{equation}
\end{description}
\end{lemme}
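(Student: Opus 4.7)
The plan is to derive all four items as direct consequences of the weight identity $p_{i+1}=p_i+\kappa$ (which is the content of $p_i=p+(i-1)\kappa$), together with the scaling relation $V(\delta_\varepsilon(z))=\varepsilon^2V(z)$ and the homogeneity of $u_r$ of degree $p_{r+1}$. So the proof is essentially bookkeeping with the dilations $\delta_\varepsilon$.

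For item (a), I would compute componentwise: the $i$-th coordinate of $J_r\delta_\varepsilon(z)$ is $\varepsilon^{p_{i+1}}z_{i+1}$, while the $i$-th coordinate of $\delta_\varepsilon(\varepsilon^\kappa J_rz)$ is $\varepsilon^{p_i+\kappa}z_{i+1}$; these agree because $p_{i+1}=p_i+\kappa$. For the second field, $u_r(\delta_\varepsilon(z))e_r=\varepsilon^{p_{r+1}}u_r(z)e_r$ by (ii), and this matches $\delta_\varepsilon(\varepsilon^\kappa u_r(z)e_r)=\varepsilon^{p_r+\kappa}u_r(z)e_r$. For item (c), differentiating the identity $V(\delta_\varepsilon(z))=\varepsilon^2V(z)$ with respect to $z_j$ yields $\varepsilon^{p_j}\partial_jV(\delta_\varepsilon(z))=\varepsilon^2\partial_jV(z)$, so $\partial_jV$ is homogeneous of degree $2-p_j$; multiplying by $\varepsilon^{p_j}$ gives the $j$-th component of the claimed relation. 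Item (d) is the classical Euler identity obtained by differentiating $V(\delta_\varepsilon(z))=\varepsilon^2V(z)$ with respect to $\varepsilon$ at $\varepsilon=1$.

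For item (b), the homogeneity claim follows by combining (a) with the degree of $\partial_jV$ derived in (c): each product $\partial_iV(z)(J_rz)_i=\partial_iV(z)z_{i+1}$ has degree $(2-p_i)+p_{i+1}=2+\kappa$, and similarly $\partial_rV(z)u_r(z)$ has degree $(2-p_r)+p_{r+1}=2+\kappa$, so the whole inner product is homogeneous of degree $2+\kappa$. The estimate \eqref{eq:ur-pVr-Z} is where the main (mild) obstacle lies: I need to upgrade the pointwise homogeneity into a global bound. The clean argument is to note that $|u_r\partial_rV|$ is continuous by (iv), and $V$ is positive definite (being a Lyapunov function homogeneous of degree $2$), so the level set $\{z:V(z)=1\}$ is compact and does not contain the origin. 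Setting $c_u:=\max_{V(z)=1}|u_r(z)\partial_rV(z)|$, for an arbitrary $z\neq 0$ I would write $z=\delta_\varepsilon(\bar z)$ with $\varepsilon=V(z)^{1/2}$ and $V(\bar z)=1$; then by the homogeneity of degree $2+\kappa$ of $|u_r\partial_rV|$ one gets $|u_r(z)\partial_rV(z)|=\varepsilon^{2+\kappa}|u_r(\bar z)\partial_rV(\bar z)|\leq c_u V(z)^{1+\kappa/2}$, as claimed.
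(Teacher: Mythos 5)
Your proof is correct and follows exactly the standard homogeneity bookkeeping that the paper treats as immediate: componentwise weight counting $p_{i+1}=p_i+\kappa$ for item (a), differentiation of $V(\delta_\varepsilon(z))=\varepsilon^2V(z)$ in $z_j$ and in $\varepsilon$ for items (c) and (d) (the Euler relation, as the paper notes), and the compactness-of-the-level-set argument $\{V=1\}$ to turn the degree-$(2+\kappa)$ homogeneity of $u_r\partial_rV$ into the global bound \eqref{eq:ur-pVr-Z}. Nothing is missing and no step fails; this is the intended argument.
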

Item $(d)$ is the classical Euler relation verified by the $C^1$ homogeneous real valued function $V$.
\begin{notations} In the sequel, several constants will appear, which are 
independent of a particular trajectory of \eqref{nonlinear1} but which may 
depend on some of the bounds appearing in \eqref{H1} and also possibly on 
the parameter $\varepsilon$ which will be used to define the target to reach 
in finite time. To track this dependence, we will repeatedly use the symbol 
$C_j(E)$, where $j$ is an integer and $E$ is any subset of $\{\gamma,\psi,\phi\}$. 
If $E=\emptyset$ (i.e. the constant does not depend on the unknown 
constants in $\{\gamma,\psi\}$), we simply use $C_j$.
\end{notations}

\section{Case 1}\label{sec:case1}
\subsection{ Adaptive controller design}

\noindent Consider System \eqref{nonlinear1} where the uncertainty functions 
$\gamma(\cdot)$ and $\phi(\cdot)$ are measurable and locally bounded, and satisfy \eqref{bound-G} with {\bf unknown} constants $\gamma_m$ and $\phi_M$.
Moreover, several constants appearing later on in the form $C_j(E)$
with $E$ any subset of $\{\gamma,\phi\}$ only depend on $\{\gamma_m,\phi_M\}$.


To solve Problem 1, we need to define appropriate neighborhoods of origin where the state $z$ is supposed to enter and then stay there for all subsequent times. Such neighborhoods are defined, for $\mu>0$ by 
$
{\cal{V}}_\mu:=\{z\in\R^r\mid V(z)<\mu\},
$
where $V$ is part of some given (AHFP) for the pure chain of integrators of order $r$ as given in Definition~\ref{def1}.
We then propose the following adaptive controller
\beqnum\label{ST-feedback}
 u(t) =   L(t,z)u_r(z(t))\\
\eeqnum
where the adaptive gain $L(t,z)$ will be chosen later on.

Plugging \eqref{ST-feedback} into \eqref{nonlinear1} yields,
\beqnum\label{eq:ST-feedback}
\dot z(t) =  J_rz(t)+\gamma\ e_rL(t,z)u_r(z(t))+\phi(t),
\eeqnum
as long as such trajectories are defined.

From now on, we fix an arbitrary function $\mu:[0,\infty)\to (0,1)$ of class $C^1$ which is decreasing to zero, verifying 
\begin{equation}\label{eq:mu1}
\dot \mu(t)>-\frac{c_r}2\mu^{1+\frac{\kappa}2}(t),\quad \forall t\geq 0.
\end{equation}
Note that the $C^1$ function $\tilde\alpha$ defined by 
\begin{equation}\label{eq:alphatilde}
\tilde\alpha(t)=\frac{\mu(t)}{\tilde{\phi}(t)^{1+\frac1{\gamma_r(1+\frac{\kappa}2)}}},\ \forall t\geq 0
\end{equation}
is non-increasing.

In order to define the adaptive gain $L(t,z)$ inside ${\cal{V}}_{\mu(t)}$, $t\geq 0$, we need to consider, for any $\mu>0$, the function 
 $L_\mu$ is defined as 
\begin{equation}\label{eq:L}
L_\mu [0,\mu)\to (0,\infty),\quad
V\mapsto \frac{\mu}{\mu-V}.
\end{equation}
On the other hand, for the part of the trajectory outside ${\cal{V}}_{\mu(t)}$, 
we must consider a time-varying non-decreasing continuous gain function 
$l:\R_+\to [1,\infty)$ only subject to 
\begin{equation} \label{eq:lt}
\lim_{t\to \infty}\frac{l(t)\mu(t)^{\gamma_r(1+\frac{\kappa}2)}}{\tilde{\phi}(t)^{1+\gamma_r(1+\frac{\kappa}2)}} = \infty.
\end{equation}

The adaptive gain $L(t,z)$ is defined as follows: it is equal to $l(t)$ as long as $V (z(t))> \mu(t)/2$ 
and then, if there exists a first time $\bar t$ so that $V (z(t))\le \mu(t)/2$, the adaptive gain 
$L(t,z)$ is taken equal to $\bar{c}L(z)$, for $t>\bar t$, where 
$\bar{c}=\frac{l(\bar t)}{2^{\gamma_r(1+\frac{\kappa}2)}}\geq 1$.

Hence, the variable gain $L(t,z)$ is defined, as long as the trajectory $z(\cdot)$ of \eqref{eq:ST-feedback} exists, by  
\begin{equation}
\label{eq:adaptgain3}
L(t,z)=\begin{cases}
l(t),  \quad \quad \textrm{if} \quad 0\leq t < \bar{t},  \\
 \bar{c}L_{\mu(t)}^{\gamma_r(1+\frac{\kappa}2)}(z(t)), \quad \textrm{if}  \quad t\geq\bar{t},
\end{cases}
\end{equation} %
with the convention that $\bar{t}=0$ if 
$V(z_0)\le \mu(\bar t)/2$ and $\bar{t}=\infty$ if $z(\cdot)$ is defined 
for all times and $V(z(t))>\mu(t)/2$. Note that $\bar{c}$ has been 
chosen so that the time-varying feedback $t\mapsto L(t,z(t))$ is continuous.

We next provide our main stabilization result, whose proof is deferred to the next subsection.

\begin{montheo}
\label{theorem1} 
Let $r$ be a positive integer and System \eqref{nonlinear1} be the perturbed 
$r$-chain of integrators with uncertainty functions $\gamma(\cdot)$ and 
$\phi(\cdot)$ subject to Eqs. \eqref{bound-G}, where 
the constants $\gamma_m$ and ${\phi}_M$ are {\bf unknown}. Let 
$(u_r,V)$ be a (sAHFP) for the pure chain of integrators of order $r$ as given 
in Definition~\ref{def1}. For every $z_0\in\mathbb{R}^r$, consider any 
trajectory $z(\cdot)$ starting at $z_0$ of System \eqref{nonlinear1} closed by 
the feedback control law \eqref{ST-feedback} where the adaptive gain 
$L(t,z)$ is given in \eqref{eq:adaptgain3}. Then,  
\begin{description}
\item[$(i)$] the trajectory $z(\cdot)$ is defined for all non-negative times; 
\item[$(ii)$] there exists a first time $\bar{t}$ for which $V(z(\bar{t}))
\le{ \mu(\bar t)/2}$ and for all $t\ge \bar{t}$, one has $V(z(t))<\mu(t)$;
\item[$(iii)$]  there exists a time $t_0$ for which 
\begin{equation}\label{eq:Vfinal}
V(z(t))\leq \big(1-C_1(\gamma,\phi)\tilde\alpha(t)\big)\mu(t),\ \forall t\geq t_0.
\end{equation}
\end{description}
\end{montheo}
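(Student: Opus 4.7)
The plan is to exploit the two-phase structure of the adaptive gain \eqref{eq:adaptgain3}: I split the argument into a \emph{reaching} phase on $[0,\bar t)$, where $V(z(t))>\mu(t)/2$ and the externally imposed gain $l(t)$ is active, and a \emph{barrier} phase on $[\bar t,\infty)$, where the blow-up of $L_{\mu(t)}(V)=\mu(t)/(\mu(t)-V)$ as $V\to\mu(t)^-$ confines the trajectory to $\mathcal V_{\mu(t)}$. The common backbone in both phases is the Lyapunov computation using \eqref{eq:ST-feedback}, the strict Lyapunov identity \eqref{est-V1}, and the sAHFP form $u_r\partial_r V=-l_r|\partial_rV|^{\gamma_r+1}$:
\begin{equation*}
\dot V = -\rho(z)V^{1+\kappa/2} - (\gamma L-1)\,l_r|\partial_rV|^{\gamma_r+1} + \phi\,\partial_rV.
\end{equation*}

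In the reaching phase, I would apply Young's inequality with exponents $(\gamma_r+1,(\gamma_r+1)/\gamma_r)$ to $\phi\,\partial_rV$, absorbing half of the gain term (valid once $\gamma_m l(t)\ge 2$, which holds eventually since \eqref{eq:lt} combined with $\mu(t)\to 0$ forces $l(t)\to\infty$). This yields
\begin{equation*}
\dot V \le -\tfrac{c_r}{2}V^{1+\kappa/2} + C(\gamma,\phi)\,\tilde\phi(t)^{1+1/\gamma_r}/l(t)^{1/\gamma_r}.
\end{equation*}
The content of \eqref{eq:lt} is precisely that the residual is dominated by $\mu(t)^{1+\kappa/2}$; combined with \eqref{eq:mu1} this gives $\dot V<\dot\mu$ whenever $V\ge\mu/2$ and $t$ is large. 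An elementary contradiction argument (if $V-\mu/2$ stayed positive, its decay rate would be bounded away from zero, contradicting positivity) forces $\bar t<\infty$, and Gronwall-type estimates on $|z|$ derived from $V$ (using the homogeneity relation $V(z)\ge c\,|z|^{p_{\min}}$ for some exponent) rule out finite-time blow-up on $[0,\bar t]$.

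In the barrier phase, the same Young estimate applied with $L=\bar cL_{\mu(t)}^{\gamma_r(1+\kappa/2)}$ produces
\begin{equation*}
\dot V \le -\rho(z)V^{1+\kappa/2} + C(\gamma,\phi)\,\tilde\phi(t)^{1+1/\gamma_r}\Bigl(\tfrac{\mu(t)-V}{\mu(t)}\Bigr)^{1+\kappa/2}.
\end{equation*}
Suppose by contradiction that a first time $t^\ast>\bar t$ exists with $V(z(t^\ast))=\mu(t^\ast)$. The residual term vanishes at $t^\ast$, leaving $\dot V(t^\ast)\le-c_r\mu(t^\ast)^{1+\kappa/2}<\dot\mu(t^\ast)$ by \eqref{eq:mu1}; this strict inequality contradicts the fact that $V(t)-\mu(t)$ reaches zero from below at $t^\ast$, proving item (ii) and, together with the reaching-phase analysis, item (i).

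For the asymptotic refinement (iii), I would rewrite the barrier-phase inequality in terms of $s:=1-V/\mu$. Using $V=\mu(1-s)$ and the bound $\dot\mu\ge-\tfrac{c_r}{2}\mu^{1+\kappa/2}$, a short manipulation yields an inequality of the form
\begin{equation*}
\mu\dot s \ge \tfrac{c_r}{2}\mu^{1+\kappa/2}(1-s) - C(\gamma,\phi)\,\tilde\phi(t)^{1+1/\gamma_r}\,s^{1+\kappa/2},
\end{equation*}
whose equilibrium balance sits at an $s$ of the same order as $\tilde\alpha(t)$ in \eqref{eq:alphatilde}. The monotonicity of $\tilde\alpha$ guaranteed by \eqref{eq:alphatilde} allows a standard scalar comparison lemma to promote this pointwise sign condition into the asymptotic lower bound $s(t)\ge C_1(\gamma,\phi)\tilde\alpha(t)$ for all $t\ge t_0$, which is exactly \eqref{eq:Vfinal}.

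The hardest part is the final step: converting the pointwise differential inequality for $s$ into a quantitative lower bound uniform in $t$ despite the simultaneous time-variation of $\mu$ and $\tilde\phi$. The specific exponent $\gamma_r(1+\kappa/2)$ appearing in \eqref{eq:adaptgain3} is what makes the Young residual factor $((\mu-V)/\mu)^{1+\kappa/2}$ combine with $\tilde\phi^{1+1/\gamma_r}$ so that the equilibrium is on the scale of $\tilde\alpha$; matching these exponents is the calibration driving the whole construction. A minor technical subtlety, not really an obstacle, is that $\partial_rV$ may vanish at points where $V>0$, but this is harmless because there both the perturbation and the gain-correction terms vanish together, leaving the strictly negative $-\rho V^{1+\kappa/2}$ alone to provide decay.
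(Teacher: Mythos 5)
Your proposal follows essentially the same route as the paper's proof: your backbone Lyapunov computation is the paper's \eqref{eq:key} and Lemma~\ref{le:firstproof}, your reaching-phase contradiction is Lemma~\ref{le:existence1}, your first-crossing argument at $V=\mu$ based on the vanishing of the residual together with \eqref{eq:mu1} is Lemma~\ref{le:existence}, and your comparison argument for $s=1-V/\mu$ plays the role of Proposition~\ref{prop:fdl1}. Applying Young's inequality directly to $\phi\,\partial_rV$ instead of going through Lemma~\ref{lem:urVr} is only a cosmetic difference, and your remark about points where $\partial_rV=0$ and about sublinear growth ruling out blow-up before $\bar t$ are both sound.

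The one step you assert rather than verify is the exponent calibration, and it does not come out as you state it. Your reaching-phase residual is $C\,\tilde\phi^{1+1/\gamma_r}/l(t)^{1/\gamma_r}$, and its domination by $\mu^{1+\kappa/2}$ is equivalent to $l\,\mu^{\gamma_r(1+\frac\kappa2)}/\tilde\phi^{\gamma_r+1}\to\infty$, which is strictly stronger than \eqref{eq:lt} when $\tilde\phi$ is unbounded, since $\tilde\phi^{1+\gamma_r(1+\frac\kappa2)}\le\tilde\phi^{1+\gamma_r}$ ($\kappa<0$, $\tilde\phi\ge1$); so ``precisely'' is an overstatement. Likewise, the equilibrium of your $s$-inequality sits at $s\sim\mu/\tilde\phi^{(\gamma_r+1)/(\gamma_r(1+\frac\kappa2))}$, which is smaller than $\tilde\alpha$ of \eqref{eq:alphatilde} by the factor $\tilde\phi^{-\vert\kappa\vert/(2+\kappa)}$, so your route delivers \eqref{eq:Vfinal} only with this smaller margin, not with the stated $\tilde\alpha$. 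To be fair, the same tension is present inside the paper: carrying out the substitution $\eta\mapsto\eta\tilde\mu$ with $\tilde\phi\,\tilde\mu^{1+\frac\kappa2}=\mu^{1+\frac\kappa2}$ in the proof of Lemma~\ref{le:firstproof} actually produces the exponent $1+\gamma_r$ on $\tilde\phi$ rather than the stated $1+\gamma_r(1+\frac\kappa2)$ in \eqref{eq:lemma32}, and your bookkeeping agrees with that derivation, not with the displayed constants. So this is a calibration mismatch you inherited rather than a conceptual gap; still, as written your argument does not prove the literal conclusion under the literal hypothesis \eqref{eq:lt}, and you should either strengthen the growth condition on $l$ and weaken $\tilde\alpha$ consistently with your Young estimate, or exhibit a sharper estimate than plain Young (none is apparent) that recovers the paper's exponents.
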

\begin{rem}\label{rem:unique} 
Since there is no uniqueness for the trajectories of System \eqref{nonlinear1} 
closed by the feedback control law \eqref{ST-feedback}, where the adaptive 
gain $L(t,z)$ is given in \eqref{eq:adaptgain3}, and starting at $z_0$, the time 
$\bar{t}$ does depend on that trajectory and not just on $V(z_0)$.
\end{rem}
 \subsection{Proof of Theorem~\ref{theorem1}}
 Before proceeding with our argument, we need the following technical result, which will be used repeatedly in the 
sequel.
\begin{lemme}\label{lem:urVr} 
There exist positive constants $C_0,C_1$ 
such that, for every $\eta>0$, one has
\begin{equation}\label{eq:urVr}
|\partial_rV|\leq C_0\eta^{1+\frac{\kappa}2}+C_1\frac{|u_r\partial_rV|}{\eta^{\gamma_r(1+\frac{\kappa}2)}},
\end{equation}
with $\gamma_r>0$ given by the (sAHFP) condition. 
\end{lemme}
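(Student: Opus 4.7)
I would exploit the explicit form of $u_r$ guaranteed by the (sAHFP) assumption, namely $u_r=-l_r\lva \partial_r V\rva^{\gamma_r}$, which immediately yields
\[
|u_r(z)\partial_rV(z)|=l_r|\partial_rV(z)|^{\gamma_r+1},\quad \forall z\in\R^r,
\]
since $\lva x\rva^{\gamma_r}\cdot x=|x|^{\gamma_r+1}$. Substituting this identity into the right-hand side of \eqref{eq:urVr}, dividing through by $\eta^{1+\frac{\kappa}{2}}$, and introducing the dimensionless quantity $x:=|\partial_rV(z)|\,\eta^{-(1+\frac{\kappa}{2})}\ge 0$, the target bound reduces to the elementary scalar inequality: to find positive constants $C_0,C_1$ such that
\[
x\le C_0+C_1\, l_r\, x^{\gamma_r+1},\quad \forall x\ge 0.
\]

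Since $\gamma_r+1>1$, this is a Young-type inequality that I would verify by splitting into the two regimes $x\le 1$ and $x>1$. In the first regime, $x\le 1$, so any $C_0\ge 1$ works; in the second, $x\le x^{\gamma_r+1}$, so any $C_1 l_r\ge 1$ works. Choosing for instance $C_0=1$ and $C_1=1/l_r$ (both independent of $z$ and of $\eta$) settles the claim, the scalar inequality then reading simply $x\le 1+x^{\gamma_r+1}$ for all $x\ge 0$.

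There is essentially no hard step in this argument: the only point of care is the change of variable that reduces a bound on $\R^r\times(0,\infty)$ to a single one-variable scalar inequality. Once this is done, the condition $\gamma_r>0$ trivially provides the large-$x$ dominance needed for the Young-type splitting, and the resulting constants $C_0,C_1$ depend only on the chosen (sAHFP) (through $l_r$ and $\gamma_r$) and not on the particular trajectory of System \eqref{nonlinear1} nor on $\eta$, as required for their use in the forthcoming proof of Theorem~\ref{theorem1}.
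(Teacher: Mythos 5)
Your proposal is correct and follows essentially the same route as the paper: it uses the (sAHFP) identity $u_r=-l_r\lva \partial_r V\rva^{\gamma_r}$ to write $|u_r\partial_rV|=l_r|\partial_rV|^{\gamma_r+1}$ and then a Young-type (case-splitting) scalar inequality in the normalized variable $x=|\partial_rV|\,\eta^{-(1+\frac{\kappa}{2})}$, which is exactly the one-line argument the paper invokes. The normalization step is sound since $1+\frac{\kappa}{2}>0$ for $\kappa\in(-1,0)$, and your constants $C_0=1$, $C_1=1/l_r$ depend only on the chosen (sAHFP), as required.
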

Such an inequality follows by applying Young's inequality to 
$\vert\partial_rV\vert$, which in turn is obtained from the definition of a (sAHFP).

For simplicity, we denote $V(z)$ by 
$Z$ and drop the (obvious) arguments in several quantities below. 

We have the following technical lemma.
\begin{lemme}\label{le:firstproof}
Let $z(\cdot)$ be a non-trivial trajectory of System \eqref{nonlinear1} closed 
by the feedback control law \eqref{ST-feedback} where the adaptive gain 
$L(t,z)$ is given in \eqref{eq:adaptgain3}. As long as $z(\cdot)$
is defined, one has that  
\begin{eqnarray}
\frac{dZ}{dt}&\leq&-c_r\Big(Z^{1+\frac{\kappa}2}-\Big(\frac{\mu}3\Big)^{1+
\frac{\kappa}2}\Big)\nonumber\\
& &-|u_r\partial_rZ|\gamma_m\Big(L(t,z)-\tfrac1{\gamma_m}-\frac{C_4(\gamma,
\phi)\tilde{\phi}^{1+\gamma_r(1+\frac\kappa2)}}{\mu^{(1+\frac{\kappa}2)\gamma_r}}\Big).\label{eq:lemma32}
\end{eqnarray}
\end{lemme}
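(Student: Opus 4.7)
My plan is to differentiate $Z=V(z)$ along the closed-loop dynamics \eqref{eq:ST-feedback} and extract the nominal dissipation supplied by the (sAHFP) structure, then absorb the perturbation via Lemma~\ref{lem:urVr} with a careful choice of its free parameter $\eta$. Starting from $\dot Z=\langle\nabla V,J_r z\rangle+(\gamma L u_r+\phi)\partial_r V$, I use item $(iii)$ of Definition~\ref{def1} applied to the unperturbed flow $\dot z=J_r z+u_r e_r$, which gives $\langle\nabla V,J_r z+u_r e_r\rangle=-\rho(z)V^{1+\kappa/2}$, to rewrite $\dot Z=-\rho V^{1+\kappa/2}+(\gamma L-1)u_r\partial_r V+\phi\,\partial_r V$. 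Item $(iv)$ yields $u_r\partial_r V=-|u_r\partial_r V|$, and combining $\rho\geq c_r$ with $\gamma\geq\gamma_m$ and $L\geq 0$ produces
\[
\dot Z\;\leq\;-c_r Z^{1+\frac{\kappa}{2}}\;-\;\gamma_m\bigl(L-\tfrac{1}{\gamma_m}\bigr)|u_r\partial_r V|\;+\;\phi\,\partial_r V.
\]

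Next I bound $\phi\,\partial_r V\leq \phi_M\tilde\phi\,|\partial_r V|$ via \eqref{bound-G} and apply Lemma~\ref{lem:urVr} pointwise with parameter $\eta>0$:
\[
\phi_M\tilde\phi\,|\partial_r V|\;\leq\;\phi_M\tilde\phi\,C_0\,\eta^{1+\frac{\kappa}{2}}\;+\;\phi_M\tilde\phi\,C_1\,\frac{|u_r\partial_r V|}{\eta^{\gamma_r(1+\frac{\kappa}{2})}}.
\]
I then select $\eta$ as a time-dependent function of $\mu(t)$ and $\tilde\phi(t)$, of the form $\eta=k\,\mu(t)\,\tilde\phi(t)^{-\beta}$, with the constants $k,\beta$ tuned so that the first term on the right is dominated by $c_r(\mu/3)^{1+\kappa/2}$; this absorption requirement both cancels the $\tilde\phi$-factors of the first term and pins down $\beta$. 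Substituting this $\eta$ back into the second term produces exactly the announced coefficient $C_4(\gamma,\phi)\,\tilde\phi^{1+\gamma_r(1+\kappa/2)}/\mu^{\gamma_r(1+\kappa/2)}$ in front of $|u_r\partial_r V|$, with $C_4$ aggregating $\phi_M,\gamma_m,C_0,C_1$ and the numerical factor $3^{1+\kappa/2}$.

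Collecting these inequalities and factoring $\gamma_m|u_r\partial_r V|$ out of the non-dissipative part yields the stated bound. The main technical obstacle is precisely this exponent-matching step: Lemma~\ref{lem:urVr} couples the exponents $(1+\kappa/2)$ and $\gamma_r(1+\kappa/2)$ through Young duality, so the single free parameter $\eta$ must simultaneously neutralize the $\tilde\phi$-dependence on the absorbable side and produce the announced $\tilde\phi/\mu$ ratio on the $|u_r\partial_r V|$ side. Once this balance has been struck, the remainder is routine algebra, and one observes that the resulting constant depends only on $\gamma_m$ and $\phi_M$, in agreement with the notational convention introduced before \eqref{eq:L}.
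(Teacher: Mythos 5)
Your proof follows essentially the same route as the paper: differentiate $Z=V(z)$, use items $(iii)$--$(iv)$ of the (sAHFP) together with $\gamma\geq\gamma_m$ and $L\geq 0$ to isolate the dissipative term and the factor $-\gamma_m\big(L-\tfrac{1}{\gamma_m}\big)\vert u_r\partial_r V\vert$, then absorb $\phi_M\tilde{\phi}\vert\partial_r V\vert$ via Lemma~\ref{lem:urVr} with the time-varying parameter $\eta\propto \mu\,\tilde{\phi}^{-1/(1+\frac{\kappa}{2})}$, which is exactly the paper's choice $\eta\tilde{\mu}(t)$ with $\tilde{\phi}\tilde{\mu}^{1+\frac{\kappa}{2}}=\mu^{1+\frac{\kappa}{2}}$, followed by tuning the constant so that the absorbable term equals $c_r(\mu/3)^{1+\frac{\kappa}{2}}$. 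The only caveat is that this substitution literally produces the exponent $1+\gamma_r$ on $\tilde{\phi}$ rather than the announced $1+\gamma_r(1+\frac{\kappa}{2})$, but this bookkeeping point is present in the paper's own computation as well, so your argument is in line with it.
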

\begin{proof} 
By taking the time derivative of $Z(t)$ along any non-trivial trajectory of \eqref{eq:ST-feedback11}, one gets
\begin{eqnarray}
\frac{dZ}{dt}&=&\langle\nabla Z,J_rz+e_ru_r\rangle
+\vert u_r\partial_rZ\vert-\gamma\vert u_r\partial_rZ\vert L(t,z)+
\phi\partial_rZ\nonumber\\
&\leq& -\rho Z^{1+\frac{\kappa}2}-|u_r\partial_rZ|\gamma_m\Big(L(t,z)-\tfrac1{\gamma_m}\Big)+\phi_M\tilde{\phi}\vert\partial_rZ\vert.
\label{eq:key}
\end{eqnarray}
We next use Lemma~\ref{lem:urVr} (to the term 
$\vert\partial_rZ\vert$ in \eqref{eq:key}) with the parameter $\eta>0$ now 
equal to $\eta\tilde{\mu}(t)$ with $\eta\in (0,1)$ and $\tilde{\mu}(t)>0$ chosen 
so that 
$$
\tilde{\phi}(t)\tilde{\mu}(t)^{1+\frac{\kappa}2}=\mu(t)^{1+\frac{\kappa}2},\ \forall t\geq 0.
$$
We obtain that
\begin{eqnarray}
\frac{dZ}{dt}&\leq&-c_r\Big(Z^{1+\frac{\kappa}2}-C_2(\phi)(\eta\mu)^{1+
\frac{\kappa}2}\Big)\nonumber\\
& &-|u_r\partial_rV|\gamma_m\Big(L(t,z)-\tfrac1{\gamma_m}-\frac{C_3(\gamma,
\phi)\tilde{\phi}^{1+\gamma_r(1+\frac\kappa2)}}{(\eta\mu)^{(1+\frac{\kappa}2)\gamma_r}}\Big),\label{eq:lemma31}
\end{eqnarray}
for every $\eta\in (0,1)$. Choosing the latter parameter \eqref{eq:lemma31} according to  
$$
C_2(\phi)(3\eta)^{1+\frac{\kappa}2}=1,
$$
one gets \eqref{eq:lemma32}. 
\end{proof}
We now prove the existence of a finite $\bar{t}$ for the trajectory $z(\cdot)$ starting at $z_0$ of System 
\eqref{nonlinear1} closed by the feedback control law \eqref{ST-feedback} where the adaptive gain $L(t,z)$ is 
given in \eqref{eq:adaptgain3}. From the definition of $L$, it remains to do so when $V(z_0) > \mu(0)/2$ and in this case, $L(t,z)=l(t)$ as long as the corresponding trajectory is defined so that 
$V(z(t)) > \mu(t)/2$. For these times, one can write \eqref{eq:ST-feedback} as 
\beqnum\label{eq:ST-feedback11}
\dot z(t)=  J_rz(t)+\gamma\ e_rl(t)u_r(z(t))+\phi(t),\\
\eeqnum
as long as such trajectories are defined. 

We actually start by showing Item $(ii)$ o the main theorem. 
\begin{lemme}\label{le:existence1}
With the notations above, the time $\bar t$ exists for every trajectory of 
\eqref{eq:ST-feedback11} is defined and finite.
\end{lemme}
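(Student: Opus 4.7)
My plan is to argue by contradiction: I suppose that $Z(t):=V(z(t))>\mu(t)/2$ holds throughout the maximal interval of existence of the trajectory of \eqref{eq:ST-feedback11}, so that $L(t,z(t))=l(t)$ all along. I first verify that no finite-time blow-up can occur. Combining the inequality of Lemma~\ref{le:firstproof} with the homogeneity bound $|u_r\partial_rV|\leq c_uV^{1+\kappa/2}$ from Lemma~\ref{lem:basic1}(b) and the local boundedness in $t$ of $\gamma,\phi,l$, one gets a bound of the shape $\dot Z\leq A(t)Z^{1+\kappa/2}+B(t)$ with $A,B$ locally bounded; since $1+\kappa/2<1$, this sub-linear growth in $Z$ rules out finite-time blow-up of $Z$, hence of $z$ (as $V$ is proper by homogeneity with positive weights). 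The trajectory is therefore defined on all of $[0,\infty)$.

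The next step is to exploit condition \eqref{eq:lt} in order to drop the second term of the Lemma's bound. This condition guarantees the existence of $T_0\geq 0$ such that, for all $t\geq T_0$,
\[
l(t)\ \geq\ \tfrac{1}{\gamma_m}+\frac{C_4(\gamma,\phi)\,\tilde{\phi}(t)^{1+\gamma_r(1+\frac{\kappa}{2})}}{\mu(t)^{\gamma_r(1+\frac{\kappa}{2})}},
\]
so that the second term of the inequality in Lemma~\ref{le:firstproof} is non-positive, leaving
\[
\dot Z(t)\ \leq\ -c_r\Bigl(Z(t)^{1+\frac{\kappa}{2}}-\bigl(\mu(t)/3\bigr)^{1+\frac{\kappa}{2}}\Bigr),\qquad t\geq T_0.
\]
The algebraic crux is then to absorb the additive perturbation by means of the standing assumption $Z>\mu/2$:
\[
\bigl(\mu/3\bigr)^{1+\frac{\kappa}{2}}=\bigl(2/3\bigr)^{1+\frac{\kappa}{2}}\bigl(\mu/2\bigr)^{1+\frac{\kappa}{2}}<\bigl(2/3\bigr)^{1+\frac{\kappa}{2}}Z^{1+\frac{\kappa}{2}},
\]
which produces the strictly contracting bound $\dot Z\leq -\tilde c\,Z^{1+\kappa/2}$ with $\tilde c:=c_r\bigl(1-(2/3)^{1+\kappa/2}\bigr)>0$.

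To conclude, I set $\alpha:=1+\kappa/2\in(1/2,1)$ and $W:=Z^{1-\alpha}$. The previous inequality becomes the constant-drift bound $\dot W\leq -(1-\alpha)\tilde c<0$, so $W$ must reach zero in finite time, forcing $Z$ to do so as well; this contradicts the standing assumption $Z>\mu/2>0$. Hence $\bar t$ must exist and be finite. The hard part of the plan is the absorption step above: the gap between the threshold $\mu/2$ and the perturbation scale $\mu/3$ furnished by Lemma~\ref{le:firstproof} must yield a strictly positive contraction coefficient, which it does precisely because $2/3<1$ and $1+\kappa/2>0$; the rest is routine comparison with the scalar ODE $\dot y=-\tilde c\,y^{1+\kappa/2}$ of finite-time convergent type.
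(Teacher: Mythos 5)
Your proof is correct and takes essentially the same route as the paper: use \eqref{eq:lt} to make the bracket in \eqref{eq:lemma32} non-negative for large $t$, absorb the $(\mu/3)^{1+\frac{\kappa}2}$ term through the standing assumption $Z>\mu/2$ to obtain $\dot Z\leq-\tilde c\,Z^{1+\frac{\kappa}2}$ with exponent less than one, and conclude by the resulting finite-time decay contradicting $Z>\mu(t)/2>0$. Your preliminary sublinear-growth bound excluding finite-time blow-up simply makes explicit a well-posedness step the paper treats in one line.
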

\begin{proof}
By the choice of $l(t)$ given in \eqref{eq:lt},  the last term in parentheses in
\eqref{eq:lemma32} tends to infinity as $t$ tends to infinity and one finally gets from \eqref{eq:lemma32} that there exists $t_1>0$ such that
$$
\frac{dZ}{dt}\leq-{\bar c}_rZ^{1+\frac{\kappa}2},
$$
as long as $t\geq t_1$ and $Z>\frac{\mu(t)}2$. That equation shows that the 
trajectory of \eqref{eq:ST-feedback11} is well defined as long as these 
conditions hold and it must stop in finite time otherwise one would get, 
because $1+\frac{\kappa}2<1$, convergence to zero in finite time, which 
yields an obvious contradiction. That concludes the proof of
Lemma~\ref{le:existence1}.
\end{proof}
From now on, $t\geq \bar{t}$, the adaptive gain $L(t,z)$ is equal to 
$\bar{c}L_{\mu(t)}$ and $z(t)\in {\cal{V}}_{\mu(t)}$ as long as that trajectory is defined. 
We start by writing \eqref{eq:ST-feedback} as 
\beqnum\label{eq:ST-feedback1}
\dot z(t)&= & J_rz(t)+\gamma\ e_r\bar{c}L_{\mu(t)}(Z(t))u_r(z(t))+\phi(t),
\eeqnum
as long as such trajectories are defined for $t\geq \bar{t}$. 
We first prove that trajectories of \eqref{eq:ST-feedback1} are defined for all 
$t\geq \bar{t}$.
\begin{lemme}\label{le:existence}
With the notations above, trajectories of \eqref{eq:ST-feedback1} are 
defined for all $t\geq \bar{t}$.
\end{lemme}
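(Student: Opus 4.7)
The plan is first to invoke standard ODE theory. The right-hand side of \eqref{eq:ST-feedback1} is continuous in $(t,z)$ on the open set $\Omega=\{(t,z):t\geq\bar t,\ V(z)<\mu(t)\}$, because $L_{\mu(t)}$ is continuous there. Peano's theorem then gives a solution on a maximal forward interval $[\bar t,T)$ with $T\in(\bar t,\infty]$. Throughout this interval $V(z(t))<\mu(t)\leq\mu(\bar t)<1$, and since $V$ is continuous, positive definite, and homogeneous with positive weights, the sublevel set $\{V\leq 1\}$ is compact; hence $z(\cdot)$ stays in a fixed compact set. Consequently, the only way $T$ can be finite is that the trajectory escapes through the moving boundary, i.e.\ $V(z(t))\to\mu(t)^-$ as $t\to T^-$.

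I rule this escape out by a barrier argument. Set $W(t):=\mu(t)-V(z(t))$, so $W(\bar t)\geq\mu(\bar t)/2>0$; the goal is to bound $W$ away from $0$ on any subinterval $[\bar t,T]$ with $T<\infty$. Since
\begin{equation*}
L(t,z(t))=\bar c\bigl(\mu(t)/W(t)\bigr)^{\gamma_r(1+\kappa/2)}\longrightarrow\infty\text{ as }W\to 0^+,
\end{equation*}
the bracket $L-1/\gamma_m-C_4(\gamma,\phi)\tilde\phi^{1+\gamma_r(1+\kappa/2)}/\mu^{\gamma_r(1+\kappa/2)}$ in \eqref{eq:lemma32} becomes positive once $W$ falls below a threshold that is uniform on $[\bar t,T]$, by compactness and continuity of $\mu$ and $\tilde\phi$. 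Discarding the resulting non-positive second term in \eqref{eq:lemma32} leaves
\begin{equation*}
\dot Z\leq -c_r\bigl(Z^{1+\kappa/2}-(\mu/3)^{1+\kappa/2}\bigr).
\end{equation*}

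The last step combines this with $\dot\mu>-(c_r/2)\mu^{1+\kappa/2}$ from \eqref{eq:mu1} to conclude $\dot W=\dot\mu-\dot Z>0$ whenever $W$ is small. When the constant $1-3^{-(1+\kappa/2)}$ fails to exceed $1/2$ (i.e.\ for $\kappa$ close to $-1$), I would reopen the proof of Lemma~\ref{le:firstproof} and, instead of imposing $C_2(\phi)(3\eta)^{1+\kappa/2}=1$, impose $C_2(\phi)(k\eta)^{1+\kappa/2}=1$ for an arbitrary $k\geq 3$; this only enlarges the constant $C_4$ and yields the analogous bound with $(\mu/k)^{1+\kappa/2}$ in place of $(\mu/3)^{1+\kappa/2}$. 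Taking $k$ large enough that $1-k^{-(1+\kappa/2)}>1/2$ then forces $\dot W>0$ in a neighborhood of the boundary, contradicting $W\to 0$ and proving $T=\infty$.

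The main obstacle is the possible vanishing of $u_r\partial_r V$ on the level set $\{V=\mu\}$: at such points, the barrier effect of $L\to\infty$ is inert because it multiplies $|u_r\partial_r V|=0$, so one cannot directly exploit unboundedness of the gain. The remedy is precisely the refinement above, namely to lean on the homogeneous dissipation term $-c_r Z^{1+\kappa/2}$ and to absorb the perturbation into the constant $C_4$ via the tunable Young's inequality parameter $\eta$. Tracking the dependence of the various constants on the unknown $\gamma_m,\phi_M$ throughout is routine.
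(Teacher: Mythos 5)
Your proof is correct, but it runs along a genuinely different track than the paper's. The paper argues by contradiction at the first hitting time $t_*$ with $Z(t_*)=\mu(t_*)$: working directly from \eqref{eq:key} (i.e.\ before Young's inequality is spent), it shows that $\partial_r Z$ must tend to zero at $t_*$ — otherwise the blow-up of $L_{\mu(t)}$ makes $\dot Z-\dot\mu$ negative near $t_*$ — and then, with the perturbation term $\phi_M\tilde\phi\,\vert\partial_r Z\vert$ vanishing in that limit, the full dissipation $-c_r\mu^{1+\frac{\kappa}2}$ beats $\dot\mu>-\frac{c_r}2\mu^{1+\frac{\kappa}2}$, contradicting \eqref{eq:contra}. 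You instead run a quantitative comparison argument on $W=\mu-V$ using the post-Young estimate \eqref{eq:lemma32}, obtaining a uniform positive lower bound on $W$ over any finite interval and invoking the standard escape criterion for maximal solutions. The price of using \eqref{eq:lemma32} as stated is exactly the issue you spotted: the offset $(\mu/3)^{1+\frac\kappa2}$ consumes more than the factor-$\tfrac12$ margin left by \eqref{eq:mu1} once $3^{-(1+\frac\kappa2)}\ge \tfrac12$ (i.e.\ $\kappa$ close to $-1$), and your repair — re-tuning the Young parameter so that $C_2(\phi)(k\eta)^{1+\frac\kappa2}=1$ with $k$ large, at the cost of enlarging $C_4$ — is legitimate, since the proof of Lemma~\ref{le:firstproof} holds for any admissible $\eta$. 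What each approach buys: yours sidesteps the paper's somewhat delicate limit claims at $t_*$ (that $\partial_r V(z(t))$ and $u_r\partial_r V(z(t))$ admit limits there, and the dichotomy on $\partial_r Z$), handles the inertness of the barrier gain on the set where $u_r\partial_r V=0$ by leaning uniformly on the homogeneous dissipation, and yields an explicit barrier $W\ge\min\bigl(W(\bar t),\delta'\bigr)$ on each compact time interval; the paper's version needs no modification of Lemma~\ref{le:firstproof}, because keeping \eqref{eq:key} retains the full $-\rho Z^{1+\frac\kappa2}$ term and thus works for every $\kappa\in(-1,0)$ without re-tuning. Your opening reduction (solutions remain in the compact sublevel set $\{V\le 1\}$, so finite-time breakdown can only occur by approaching the moving boundary) is slightly stronger than what the escape criterion literally gives (approach along a subsequence), but your lower bound on $W$ excludes that weaker behaviour as well, so no gap results.
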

\begin{proof}
We argue by contradiction and assume that there exists such a trajectory 
defined on $[\bar{t},t_*)$ with $t_*> \bar{t}$ finite so that $Z(t_*)=\mu(t_*)$ 
and $Z(t)<\mu(t)$ on $[\bar{t},t_*)$. In particular, this implies that
\begin{equation}\label{eq:contra}
\int_t^{t_*}\big(\dot Z(s)-\dot\mu(s)\big)ds> 0,
\end{equation}
for $t$ in a left neighborhood of $t_*$.

Since $Z(t)< \mu(t)$ on $[\bar{t},t_*)$, the continuous functions 
$u_r\partial_r V$ and $\partial_r V$ remain bounded on $[\bar{t},t_*)$ and 
admit finite limits at $t=t_*$. From \eqref{eq:key} with 
$L(t,z)=\bar cL_{\mu(t)}$, one deduces that $\partial_r Z$ must tend to zero as $t$ tends to $t_*$ 
for otherwise $\dot Z-\dot\mu$ would be negative in a left neighborhood of  
$t_*$, contradicting \eqref{eq:contra}. It follows that
$$
\dot Z(t_*)-\dot\mu(t_*)\leq -c_r\mu^{1+\frac{\kappa}2}(t_*)-\dot\mu(t_*)<0,
$$
(where we have used \eqref{eq:mu1}), which again contradicts 
\eqref{eq:contra}. This concludes the proof of the 
lemma.
\end{proof}
We now proceed in showing that every trajectory of 
\eqref{eq:ST-feedback1} ultimately verifies the bound given in 
Theorem~\ref{theorem1} and this is the content of the following proposition.
\begin{proposition}\label{prop:fdl1}
Let $\tilde{\alpha}$ be the continuous function defined in \eqref{eq:alphatilde}.
Then, there exists a time $t_0\geq 0$ for which 
\begin{equation}\label{eq:bddXxi}
Z(t)\leq \Big(1-C_1(\gamma,\phi)\tilde\alpha(t)\Big)\mu(t) \quad \hbox{ for }t\geq t_0.
\end{equation}
\end{proposition}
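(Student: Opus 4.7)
My plan is a moving-target barrier argument based on Lemma~\ref{le:firstproof}. Define the target $\mu_*(t) := (1-C_1(\gamma,\phi)\tilde\alpha(t))\mu(t)$ for a constant $C_1(\gamma,\phi)$ to be chosen, and the goal is to show $Z(t)\leq \mu_*(t)$ for all $t\geq t_0$ once $t_0$ is taken large enough. The first step is to re-interpret the threshold appearing in the last parenthesis of \eqref{eq:lemma32}. Plugging the explicit form $L(t,z) = \bar c\bigl(\mu/(\mu-Z)\bigr)^{\gamma_r(1+\kappa/2)}$ and using the definition \eqref{eq:alphatilde}, a direct algebraic computation shows that, provided $t$ is large enough so that the $1/\gamma_m$ term is dominated by the $\tilde\phi$ one,
$$
L(t,z)\ \geq\ \frac{1}{\gamma_m} + \frac{C_4(\gamma,\phi)\tilde\phi^{1+\gamma_r(1+\kappa/2)}}{\mu^{(1+\kappa/2)\gamma_r}}\ \Longleftrightarrow\ Z(t)\ \geq\ \mu_*(t),
$$
with $C_1(\gamma,\phi)$ essentially equal to $(\bar c/C_4)^{1/(\gamma_r(1+\kappa/2))}$.

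Next, I would run an invariance (barrier) argument in the spirit of Lemma~\ref{le:existence}. Pick $t_0\geq \bar t$ so large that $C_1\tilde\alpha(t)<1/2$ for all $t\geq t_0$, and suppose by contradiction that some first $t_*>t_0$ satisfies $Z(t_*)=\mu_*(t_*)$ with $Z(t)>\mu_*(t)$ in a right neighborhood of $t_*$. By Step~1 the $L$-term in \eqref{eq:lemma32} is non-positive at $t_*$, so
$$
\dot Z(t_*)\ \leq\ -c_r\bigl[(1-C_1\tilde\alpha)^{1+\kappa/2} - N^{-(1+\kappa/2)}\bigr]\mu(t_*)^{1+\kappa/2},
$$
where the constant $N$ (replacing the ``$3$'' in \eqref{eq:lemma32}) can be made arbitrarily large by a different choice of the free parameter $\eta$ in the proof of Lemma~\ref{le:firstproof}, in particular so that $1-N^{-(1+\kappa/2)}>1/2$. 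On the other hand, \eqref{eq:mu1} and $\dot{\tilde\alpha}\leq 0$ give
$$
\dot\mu_*(t_*)\ =\ (1-C_1\tilde\alpha)\dot\mu - C_1\dot{\tilde\alpha}\mu\ \geq\ -(1-C_1\tilde\alpha)\frac{c_r}{2}\mu(t_*)^{1+\kappa/2}.
$$
For $C_1\tilde\alpha(t_0)$ small enough this yields $\dot Z(t_*)<\dot\mu_*(t_*)$, contradicting the choice of $t_*$.

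It remains to check that some $t_0$ with $Z(t_0)\leq \mu_*(t_0)$ actually exists. Combining the two estimates above on the set $\{Z>\mu_*\}$ gives $\dot{(\mu-Z)}\geq c'_r\mu^{1+\kappa/2}>0$ for some $c'_r>0$, so $\mu - Z$ is strictly non-decreasing while $Z>\mu_*$; since $\mu(\bar t)-Z(\bar t)>0$ by Lemma~\ref{le:existence} and $C_1\tilde\alpha(t)\mu(t)\to 0$ as $t\to\infty$, one necessarily has $\mu(t)-Z(t)>C_1\tilde\alpha(t)\mu(t)$, i.e.\ $Z(t)<\mu_*(t)$, for all $t$ sufficiently large. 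The main obstacle is precisely the calibration of $N$, $C_1$ and $t_0$ so that the Lyapunov decrease produced by the over-threshold value of $L$ strictly dominates both the drift $\dot\mu_*$ permitted by \eqref{eq:mu1} and the remainder $N^{-(1+\kappa/2)}$; in particular, the ``$3$'' in \eqref{eq:lemma32} must really be read as an arbitrary constant $N\geq 4$ (say), after which the trapping step reduces to Lemma~\ref{le:existence} applied to the moving boundary $\mu_*$ instead of $\mu$.
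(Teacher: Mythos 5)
Your proposal is correct and follows essentially the same route as the paper: the key step in both is the observation that $Z\geq\big(1-C_1(\gamma,\phi)\tilde\alpha\big)\mu$ forces the barrier gain $\bar{c}L_{\mu}^{\gamma_r(1+\frac{\kappa}2)}$ above the threshold $\tfrac1{\gamma_m}+C_4(\gamma,\phi)\tilde{\phi}^{1+\gamma_r(1+\frac\kappa2)}/\mu^{(1+\frac{\kappa}2)\gamma_r}$ in \eqref{eq:lemma32}, after which one repeats the reaching/invariance arguments of Lemmas~\ref{le:existence1} and~\ref{le:existence} for the moving target $\big(1-C_1(\gamma,\phi)\tilde\alpha\big)\mu$, exactly as the paper does via \eqref{eq:theo1}. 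The only differences are cosmetic: you recalibrate the constant $3$ in \eqref{eq:lemma32} into a larger $N$ and obtain the entry time from the monotone growth of $\mu-Z$ together with $\tilde\alpha\mu\to 0$, rather than from the paper's decay inequality, and your ``$\Longleftrightarrow$'' in Step~1 should only be the implication ``$Z\geq\mu_*\Rightarrow$ gain above threshold''; neither point changes the substance of the argument.
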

\begin{proof} From \eqref{eq:lemma32} with 
$L(t,z)=\bar{c}L_{\mu(t)}^{\gamma_r(1+\frac{\kappa}2)}(z(t))$, one deduces 
that there exists $C_1(\gamma,\phi)$ so that, if 
$Z(t)\geq \big(1-C_1(\gamma,\phi)\tilde\alpha(t)\big)\mu(t)$, then 
$$
\bar{c}L_{\mu(t)}^{\gamma_r(1+\frac{\kappa}2)}(z(t))-\tfrac1{\gamma_m}-\frac{C_4(\gamma,
\phi)\tilde{\phi}^{1+\gamma_r(1+\frac\kappa2)}}{\mu^{(1+\frac{\kappa}2)\gamma_r}}<0.
$$
Using that fact in \eqref{eq:lemma32} and by taking into account \eqref{eq:mu1} and \eqref{eq:alphatilde}, one deduces that, if $Z(t)\geq \big(1-C_1(\gamma,\phi)\tilde\alpha(t)\big)\mu(t)$ for $t$ large enough, then
\begin{equation}\label{eq:theo1}\frac{d}{dt}\Big(Z-\big(1-C_1(\gamma,\phi)\tilde\alpha\big)\mu\Big)\leq
-\tfrac{c_r}2\Big(Z-\big(1-C_1(\gamma,\phi)\tilde\alpha\big)\mu\Big).
\end{equation}
We can now repeat the reasoning given in Lemma~\ref{le:existence1}
and Lemma~\ref{le:existence} with $Z-\big(1-C_1(\gamma,\phi)\tilde\alpha\big)\mu\Big)$ instead of $Z$ and one concludes.
\end{proof}
\begin{rem}\label{rem:gain0} If $\mu$ is constant and $\tilde\phi\equiv 1$, 
one recovers the standard assumption \eqref{H1} with two additional 
features: $(a)$ one does not need any upper bound on the function 
$\gamma$; $(b)$ the function 
$\tilde\alpha$ defined in \eqref{eq:alphatilde} is now constant (hence less 
than one). As a consequence of $(b)$, one deduces from \eqref{eq:Vfinal} that 
the adaptive gain function $L(t,z)=\bar{c}L_{\mu(t)}^{\gamma_r(1+\frac{\kappa}2)}(z(t))$ remains {\bf uniformly} 
bounded for large times by a positive constant only depending on 
$\gamma_m$ and $\phi_M$. However, if $\tilde\alpha$ tends to zero, then the upper bound for the adaptive gain furnished by \eqref{eq:Vfinal} tends to infinity.
\end{rem}

\begin{rem}
It should be mentioned that the proposed controller has two main advantages compared to the existing adaptive discontinuous HOSM controller proposed in \cite{laghrouche2021barrier}. Indeed, it can be used to solve more complex control problems and it provides a continuous control signal instead of a discontinuous one.   
\end{rem}

\begin{rem}The choice of the decreasing function $\mu$ is essentially only 
conditioned by \eqref{eq:mu1}. In particular, the latter condition allows one to 
possibly choose $\mu$ decreasing to zero in finite time. However, this 
convergence may occur before $Z(t)-\mu(t)/2$ becomes negative since in 
\eqref{eq:lemma32} if $z(\cdot)$ leaves in the sliding surface then $\mu(t)$ 
may be equal to zero. This would give a convergence of the state in finite 
time but with an infinite gain $l(t)$, which is not admissible. 
\end{rem}

\subsection{An example}
Consider the following third order disturbed system  
\beqnum \label{eq:systemchain3}
	\dot z_1 = z_2,\,
	\dot z_2 =  z_3,\,
	\dot z_3 = \phi + \gamma u.
\eeqnum
where the uncertainties $\gamma$ and $\phi$ are chosen as follows
\beqnum \label{eq:boundsuncer*}
	\phi = 3(1+4t), \, \phi_M=3 \,
	\gamma = 3 + 0.5 \,  (\sin(5t)), \, \gamma_m= 2.5.
\eeqnum

The objective is to ensure the convergence of the states $z_1$, $z_2$ and $z_3$ and maintain them in the predefined neighborhood of zero $V(z) < \mu=5 e^{-0.2t}$ using the adaptive continuous controller \eqref{ST-feedback} in the presence of the uncertainties \eqref{eq:boundsuncer*} whose bounds $\phi_M$ and $\gamma_m$ are unknown.

In the simulations, the initial values of the states are chosen as $z_1(0)=1$, $z_2(0)=1$ and $z_3(0)=-1$. The AHFP is designed according to Hong controller \cite{Hong} where the parameter values are set as $\kappa = -1/6$ and $p=1$. For a detailed description on the design of controller $u(t)$ see the Appendix  in \cite{laghrouche2021barrier}.  

\begin{figure}[h!]
\includegraphics[trim= 0.5cm 2.3cm 0.1cm 0.0cm, clip, width=9cm]{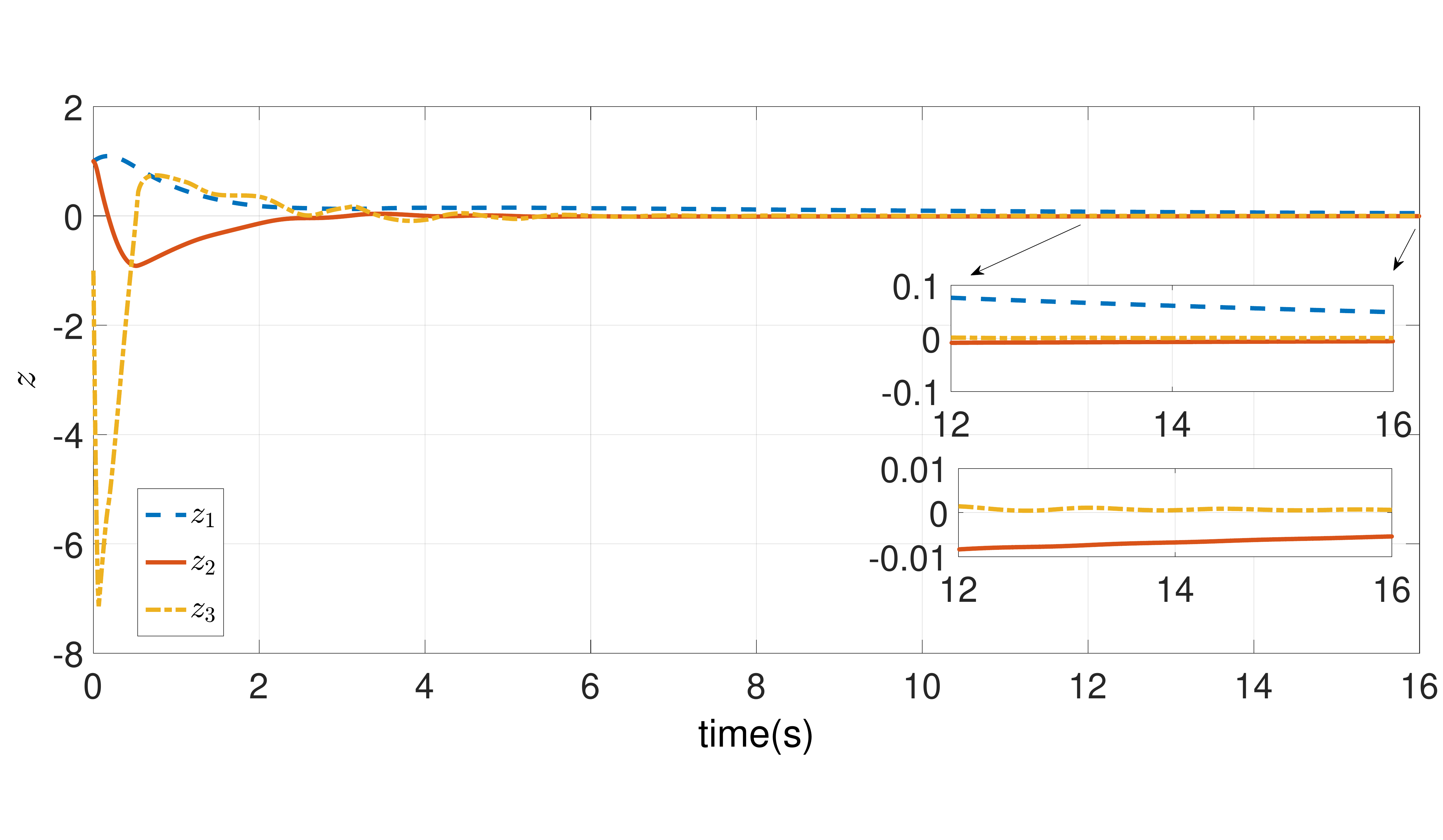}%
\caption{ The states $z_1$, $z_2$ and $z_3$ with the adaptive controller \eqref{ST-feedback}.}
\label{fig:statescase1}
\end{figure}

\begin{figure}[h!]
\includegraphics[trim= 2.5cm 2.3cm 4.1cm 3.0cm, clip, width=9cm]{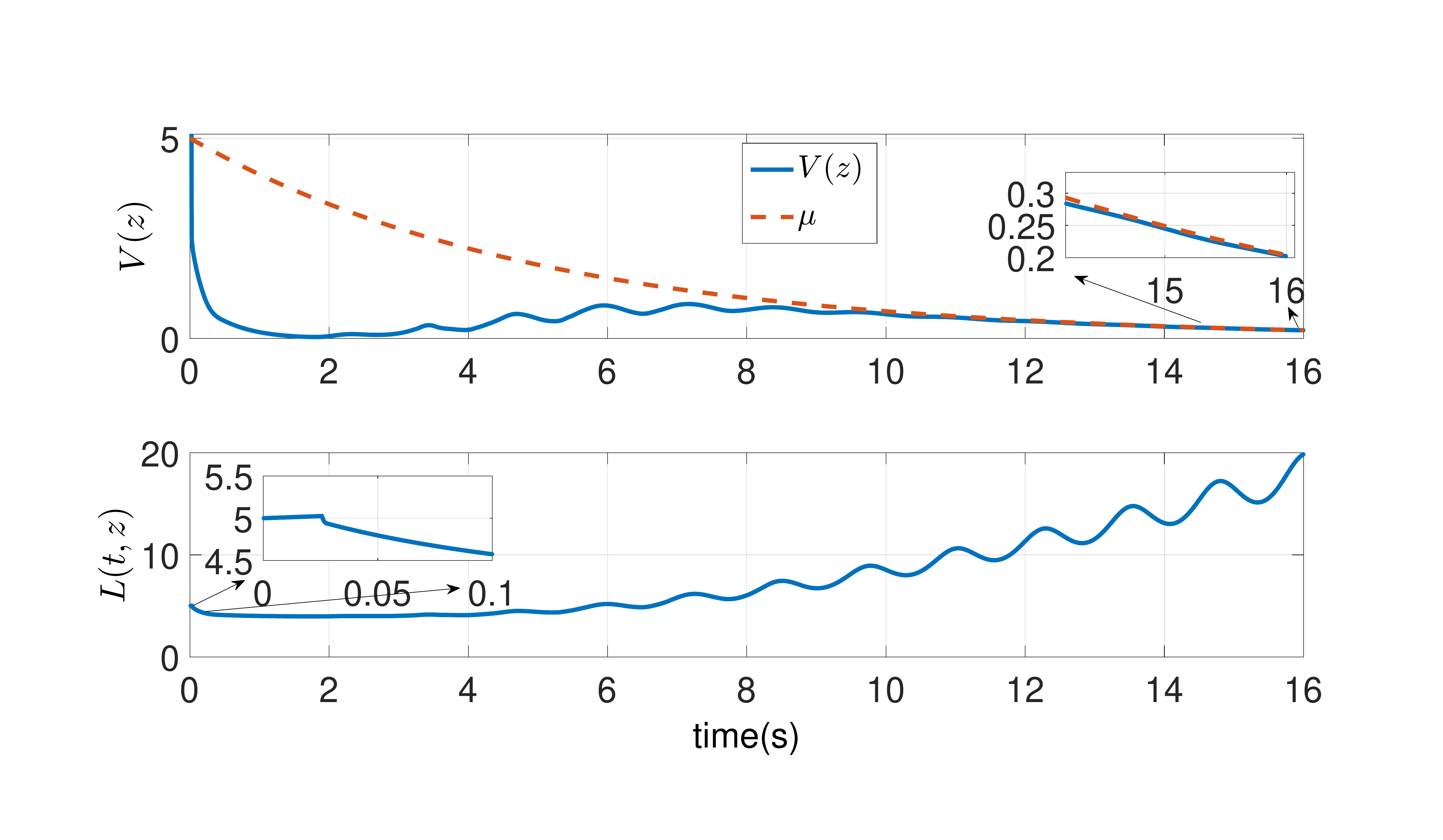}%
\caption{The Lyapunov function $V(z)$ and the adaptive gain $L(t,z)$ \eqref{eq:adaptgain3}.}
\label{fig:lyapunovcase1}
\end{figure}

\begin{figure}[h!]
\includegraphics[trim= 0.5cm 2.3cm 0.1cm 3.0cm, clip, width=9cm]{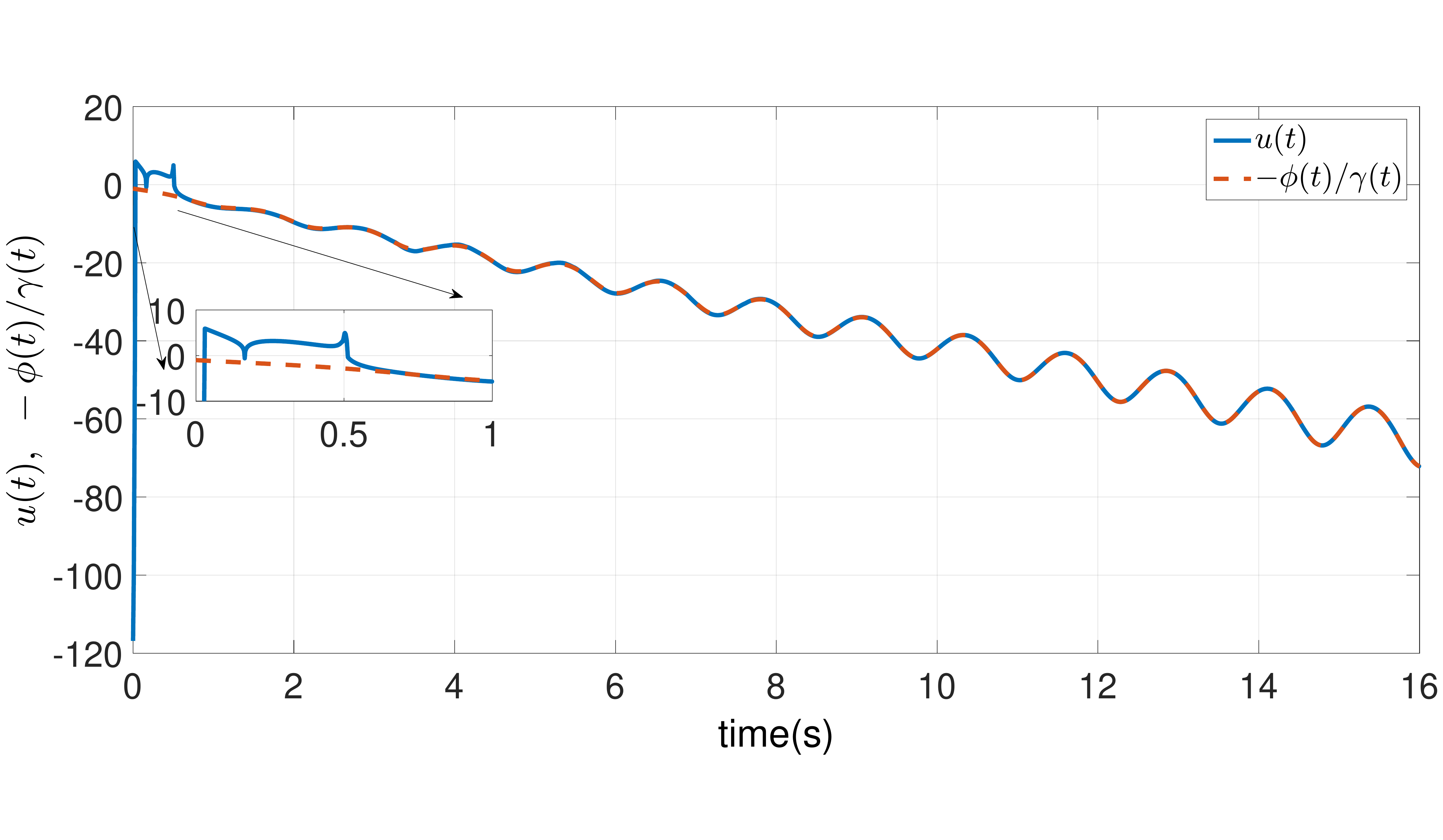}%
\caption{The control signal \eqref{ST-feedback} and the uncertainties $-\phi(t)/\gamma(t)$.}
\label{fig:estimcase1}
\end{figure}

The simulation results for system \eqref{eq:systemchain3} with the proposed adaptive controller are depicted in Figs.~\ref{fig:statescase1}-\ref{fig:lyapunovcase1}-\ref{fig:estimcase1}. Fig.~\ref{fig:statescase1} shows the convergence of the states $z_1$, $z_2$ and $z_3$ to some neighborhoods of zero. Fig.~\ref{fig:lyapunovcase1} confirms the achievement of the control objective, i.e. it shows that starting from the time instant $\bar{t}$, the states $z_1$, $z_2$ and $z_3$ stay in the predefined neighborhood of zero $V(z) < \mu$ despite the uncertainties. Moreover, it displays that the adaptive gain increases linearly until the time instant $\bar{t}$, then it follows the barrier function for all future time. It can be also noticed that the barrier function will tend to infinity due to the reason that $\mu(t)$ tends to zero and the function $\phi(t)$ tends to infinity (see Remark~\ref{rem:gain0}). The behaviors of the control signal and the uncertainties are depicted in Fig.~\ref{fig:estimcase1}. It can be clearly seen that the control signal is continuous and it follows the uncertainties starting from $\bar{t}$.

\section{Case 2 }\label{sec:case2}
\subsection{Adaptive controller design }

\noindent Consider the system described by \eqref{nonlinear1} where the 
uncertainty functions $\gamma(\cdot)$ and $\phi(\cdot)$ satisfy \eqref{bound-dot-psi}. In Section~\ref{sec:case1}, we have provided a continuous (HOSM) controller 
to force trajectories of System (\ref{nonlinear1}) to stay in an arbitrarily 
predefined neighborhood of zero once they reach it. However, as explained 
in Remark~\ref{rem:gain0}, trajectories may approach the boundary of that 
neighborhood arbitrarily close, which results in a possibly unbounded gain function $L$. 
In this section we bring a similar solution with a bounded gain function $L$ at the price of replacing Assumption \eqref{bound-G} by Assumption \eqref{bound-dot-psi}.  

As devised first in \cite{HLC2017}, the following Higher Order Super-Twisting (HOST)

\beqnum\label{ST-feedback*}
\begin{array}{ccl}
u_{ST}(t) &= &k_Pu_r(z(t)) + \bar{\xi}(t),\\
\dot{\bar{\xi}}(t)& =& -k_I \partial_r V(z(t)),\ \bar{\xi}(0)=0,\\
\end{array}
\eeqnum
where $(u_r,V)$ is a (AHFP) for the pure chain of integrators of order $r$ as given in Definition~\ref{def1}
and $k_P\geq \gamma_m$ and $k_I>0$ are arbitrary. However, the implementation of this HOST controller requires the information 
of the bounds $\gamma_m$ and ${\psi}_M$, 
which are not available anymore. 

Therefore, to overcome this problem,  we need to come up with an 
adaptative version of a HOST to solve this problem. 
In the sequel, we will assume for simplicity that both $k_P$ and $k_I$ are equal to one.

We propose the following adaptive controller
\beqnum\label{ST-feedback*}
\begin{array}{ccl}
u_{BST}(t) &= &  L_1(t,z)u_r(z(t))+ \xi(t),\\
\dot \xi(t)& =&- L_2(t,z)\partial_r V(z(t)),\ \xi(0)=0,\\
\end{array}
\eeqnum
where the adaptive gains $L_1(t,z)$ and $L_1(t,z)$ will be chosen later on.

Setting 
$
\phi=\xi+\psi,
$
and plugging \eqref{ST-feedback*} into \eqref{nonlinear1} yields,
\beqnum\label{eq:ST-feedback*}
\begin{array}{ccl}
\dot z(t)&= & J_rz(t)+\ e_r\Big(L_1(t,z)u_r(z(t))  + \phi(t)\Big),\\
\dot \phi(t)& =& -L_2(t,z) \partial_r V(z(t))+\dot\psi(t),\\
\end{array}
\eeqnum
as long as such trajectories are defined. Note that now $\phi(0)\in \R$ is 
arbitrary.

The beginning of the stabilization procedure is now similar to that of 
Case 1: after fixing an arbitrary $\varepsilon>0$, the variable gain $L_1(t,z)$ 
is defined, as long as the trajectory $z(\cdot)$ of \eqref{eq:ST-feedback*} 
exists, again by \eqref{eq:adaptgain3}, with the same convention on 
$\bar{t}(V (z_0))=0$. As for the variable gains $L_1(t,z)$ and $L_2(t,z)$, they 
are defined as 
\begin{equation}
\label{eq:adaptgain33}
L_1(t,z)=\begin{cases}
l(t),  \quad \quad \textrm{if} \quad 0\leq t < \bar{t},  \\
 \bar{c}L_{\varepsilon}^{-\kappa/2}
 (z(t)), \quad \textrm{if}  \quad t\geq\bar{t},
\end{cases}
\end{equation} 
and
\begin{equation}
\label{eq:adaptgain4}
L_2(t,z)=\begin{cases}
0,  \quad \quad \textrm{if} \quad 0\leq t < \bar{t},  \\
 L_{\varepsilon}(z(t)), \quad \textrm{if}  \quad t\geq\bar{t},
\end{cases}
\end{equation}
where $\bar{c}={l(\bar t)}/2^{-\frac{\kappa}2}\geq 1$ and the time $\bar t$ is defined as the first 
time $t$ so that $V (z(t))\le \varepsilon/2$ (with the same conventions 
as in the previous section).

\noindent It is immediate to see that $\xi\equiv 0$ on $[0,\bar{t}]$.

We next provide our main stabilization result, whose proof is deferred to the next subsection.

\begin{montheo}
\label{theorem1*} 
Let $r\geq 2$ be a positive integer, $\varepsilon>0$ and System 
\eqref{nonlinear1} be the perturbed $r$-chain of integrators with 
$\gamma(\cdot)\equiv \gamma_m$ and ${\psi}_M$ be the (unknown) bounds 
on $\gamma(\cdot)$ and $\dot \psi(\cdot)$. Let $(u_r,V)$ be an (AHFP) for 
the pure chain of integrators of order $r$ as given in Definition~\ref{def1}.
For every $z_0\in\mathbb{R}^r$, consider any trajectory $z(\cdot)$ starting at 
$z_0$ of System \eqref{nonlinear1} closed by the feedback control law 
\eqref{ST-feedback*} where the adaptive gains $L_1(t,z)$ and $L_2(t,z)$ are 
given in \eqref{eq:adaptgain33} and \eqref{eq:adaptgain4} respectively. 
Then,  $z(\cdot)$ is defined for all non-negative times, there exists a first time 
$\bar{t}$ for which $V(z(\cdot))\le{ \varepsilon/2}$. Then, for all $t\ge \bar{t}$, 
one has $V(z)<\varepsilon $. Moreover, for every trajectory $z(\cdot)$ of 
\eqref{eq:ST-feedback1}, there exist a time $t_0$ and two positive constants 
$Z_*<\varepsilon$ and $\xi_* >0$ depending on $z(\cdot)$, the bounds 
$\gamma_m$, $\psi_M$ and $\varepsilon$ such that, $V(z(t))\leq Z_*$ and 
$|\phi(t)|\leq \xi_*$ for $t\geq t_0$.
\end{montheo}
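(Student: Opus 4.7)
I will organize the proof in three stages mirroring the structure of the proof of Theorem~\ref{theorem1}, but adapted to the HOST architecture and the fixed target set ${\cal V}_\varepsilon$. Throughout set $Z(t)=V(z(t))$ and read the closed loop \eqref{eq:ST-feedback*} in its augmented form, in which $\phi=\xi+\psi$ satisfies $\dot\phi=-L_2(t,z)\partial_r V+\dot\psi$.

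\textbf{Stage 1 (existence of $\bar t$).} On $[0,\bar t]$ the integral gain $L_2$ vanishes, so $\xi\equiv 0$ and $\phi\equiv\psi$; in particular $|\phi(t)|\leq|\psi(0)|+\psi_M t$ by integrating $|\dot\psi|\leq\psi_M$. Differentiating $Z$ along \eqref{eq:ST-feedback*} and invoking Lemma~\ref{lem:urVr} as in the proof of Lemma~\ref{le:firstproof}, but now with $\mu(t)$ replaced by the constant $\varepsilon$ and the perturbation bound $\tilde\phi(t)$ replaced by the affine bound $|\psi(0)|+\psi_M t$, one obtains
$$
\tfrac{dZ}{dt}\leq -c_r\bigl(Z^{1+\kappa/2}-(\varepsilon/3)^{1+\kappa/2}\bigr)-\gamma_m |u_r\partial_r V|\bigl(l(t)-C_1-C_2(1+t)^{1+\gamma_r(1+\kappa/2)}\bigr),
$$
with $C_1,C_2$ depending on $\gamma_m,\psi_M,\varepsilon$. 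Imposing on $l$ the natural analogue of \eqref{eq:lt}, namely $l(t)/(1+t)^{1+\gamma_r(1+\kappa/2)}\to\infty$, the parenthesis becomes positive for large $t$ and the reasoning of Lemma~\ref{le:existence1} forces $\bar t$ to be finite.

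\textbf{Stage 2 (forward invariance of ${\cal V}_\varepsilon$).} For $t\geq\bar t$ I would copy the contradiction scheme of Lemma~\ref{le:existence}: suppose $Z(t_*)=\varepsilon$ with $Z<\varepsilon$ on $[\bar t,t_*)$, so both $L_1=\bar{c} L_\varepsilon^{-\kappa/2}$ and $L_2=L_\varepsilon$ blow up as $t\to t_*^-$. If $\partial_r V(z(t))$ does not tend to $0$, the restoring term $-L_1|u_r\partial_r V|$ drives $\dot Z\to-\infty$, an immediate contradiction. Otherwise $\partial_r V\to 0$; a short bookkeeping on $\dot\phi=-L_\varepsilon\partial_r V+\dot\psi$ shows that $\phi$ stays bounded on $[\bar t,t_*)$, so $\phi\partial_r V\to 0$ and $\dot Z\leq -c_r\varepsilon^{1+\kappa/2}/2$ near $t_*$, again contradicting $Z(t_*)=\varepsilon$. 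Condition \eqref{eq:mu1} is trivially satisfied since the target $\varepsilon$ is constant.

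\textbf{Stage 3 (ultimate bounds $Z_*<\varepsilon$, $\xi_*$) and main obstacle.} This is where the argument genuinely departs from Case~1 and where I expect the crux of the proof. My plan is to use a composite HOST-type Lyapunov function of the form $W(z,\phi)=Z+\tfrac{\alpha(z)}{2}\phi^2$ with state-dependent weight $\alpha(z)=1/L_\varepsilon(z)$, so that in the formal derivative
$$
\dot W=-\rho Z^{1+\kappa/2}+(\bar{c} L_\varepsilon^{-\kappa/2}-1)u_r\partial_r V+\phi\partial_r V+\alpha\phi(-L_\varepsilon\partial_r V+\dot\psi)+\tfrac{\dot\alpha}{2}\phi^2,
$$
the HOST cross-cancellation of $\phi\partial_r V$ with $-\alpha L_\varepsilon\phi\partial_r V$ takes place (since $\alpha L_\varepsilon=1$), leaving only the residual $\alpha\phi\dot\psi=\phi\dot\psi/L_\varepsilon$, which is small when $L_\varepsilon$ is large thanks to $|\dot\psi|\leq\psi_M$. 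Provided the spurious term $\dot\alpha\phi^2/2$ coming from the state-dependence of $\alpha$ can be dominated by the barrier-induced negative contribution $(\bar{c} L_\varepsilon^{-\kappa/2}-1)u_r\partial_r V$, one obtains strict decay of $W$ outside some sublevel set $\{W\leq W_*\}$, from which the desired bounds on $Z$ and on $\phi=\xi+\psi$ follow. The main technical difficulty is precisely the control of this $\dot\alpha$-contribution: in Case~1 no such issue arose because the target ${\cal V}_{\mu(t)}$ was time-varying and the barrier alone delivered the ultimate bound, whereas here the target is fixed and the integral action must be exploited quantitatively.
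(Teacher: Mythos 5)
Your Stage~1 matches the paper (the paper reduces it to Lemma~\ref{le:existence1} with $\mu\equiv\varepsilon$ and $\tilde\phi(t)=t^2+1$), but Stages~2 and~3 each contain a genuine gap. In Stage~2 the "short bookkeeping" claim that $\phi$ stays bounded on $[\bar t,t_*)$ is not justified: unlike Case~1, $\phi=\xi+\psi$ is now a state driven by $\dot\phi=-L_\varepsilon\partial_r V+\dot\psi$ with $L_\varepsilon\to\infty$ as $Z\to\varepsilon$, so even when $\partial_r V\to 0$ the product $L_\varepsilon\partial_r V$ is indeterminate and $\phi$ may drift; likewise, in your first branch the term $\phi\,\partial_r V$ in $\dot Z$ is uncontrolled, and for a mere (AHFP) (Case~2 does not assume sAHFP) "$\partial_r V\not\to 0$" does not give $\vert u_r\partial_r V\vert$ bounded away from zero. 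The paper avoids the boundary-contradiction scheme altogether: it rescales $y=\delta_{L^{1/2}(z)}(z)$, notes $L=1+Y/\varepsilon$, and proves non-blow-up of $F(Y,\phi)=\varepsilon\ln(\varepsilon+Y)+\tfrac{\gamma_m}{2}\phi^2$ via $\dot F\leq C(1+F)^{1/2}$, which yields simultaneously completeness for $t\geq\bar t$ and $V(z(t))<\varepsilon$, treating $Y$ and $\phi$ jointly -- exactly the coupling your argument leaves open.

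In Stage~3 you correctly identify the HOST cross-cancellation, but your composite function $W=Z+\tfrac{1}{2L_\varepsilon}\phi^2$ cannot deliver the stated conclusion even if the $\dot\alpha\,\phi^2$ term were tamed: after cancellation the bound on $\dot W$ contains the residual $\psi_M\vert\phi\vert/L_\varepsilon$, which is \emph{not} small when $Z$ is small (then $L_\varepsilon\approx 1$), and there is no negative term in $\vert\phi\vert$ to dominate it, so $W$ large through $\phi$ alone need not force $\dot W<0$; the ultimate bound $\vert\phi\vert\leq\xi_*$ does not follow. The missing idea is the strictification used in the paper: it keeps a constant weight on $\phi^2$ (so no $\dot\alpha$ term appears, the barrier being placed instead on the $Z$-part through $\varepsilon\ln(\varepsilon+Y)$) and adds the saturated cross term $-2\gamma_m\psi_M s(\phi)z_r$, whose derivative generates $-\gamma_m\psi_M\big(2\phi s(\phi)-\vert\phi\vert\big)\leq -\tfrac{\gamma_m\psi_M}{2}\vert\phi\vert$ up to bounded remainders, leading to \eqref{eq:G} and hence boundedness of $G$, i.e. $Z\leq Z_*<\varepsilon$ and $\vert\phi\vert\leq\xi_*$. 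Without such a cross term (or an equivalent device producing strict negativity in $\phi$), your sublevel-set decay claim fails, and the control of $\dot\alpha\,\phi^2/2$ by $(\bar c L_\varepsilon^{-\kappa/2}-1)u_r\partial_r V$ is additionally doubtful since that term need not be strictly negative for an (AHFP).
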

\begin{rem} As for Theorem~\ref{theorem1}, Remark~\ref{rem:unique} 
 holds true. Moreover, as it will be clear from the 
subsequent argument, the choice of the positive exponent of $L$ in 
$L_1(t,z)$ and $L_2(t,z)$ are not unique and can be adapted.
\end{rem}
 \subsection{Proof of Theorem~\ref{theorem1*}}Since the right-hand side of 
 \eqref{ST-feedback*} is continuous with respect to $(t,z,\phi)$, trajectories 
 starting from any $z_0,\phi_0$ at time $t=0$ exist on a non-trivial right 
 interval of zero.
 
 The first step of the argument consists in showing that $\bar{t}$ is well defined and finite for every trajectory of \eqref{eq:ST-feedback*}.
\begin{lemme}\label{le:existence2}
With the notations above, trajectories of \eqref{eq:ST-feedback*} are 
defined for all $t\leq \bar{t}$ and the latter is finite.
\end{lemme}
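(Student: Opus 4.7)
My strategy is to reduce the analysis on $[0,\bar t]$ to a direct analog of Lemma~\ref{le:existence1}. On this interval $L_2(t,z)\equiv 0$ by \eqref{eq:adaptgain4}, so $\dot\xi=-L_2\partial_r V$ combined with $\xi(0)=0$ forces $\xi\equiv 0$; hence the auxiliary ``$\phi$'' appearing in \eqref{eq:ST-feedback*} coincides with $\psi$ there. From \eqref{bound-dot-psi} one gets the a priori affine bound $|\psi(t)|\le|\psi(0)|+\psi_M t$, so I would introduce the non-decreasing continuous majorant $\tilde\psi(t):=1+|\psi(0)|+\psi_M t\ge 1$, which will play the role of $\tilde\phi$ from Case~1. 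I also take $l$ to satisfy the obvious Case~2 analog of \eqref{eq:lt}, obtained by replacing $\mu$ with the constant $\varepsilon$ and $\tilde\phi$ with $\tilde\psi$.

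Next I would redo the Lyapunov computation of Lemma~\ref{le:firstproof} in this reduced setting. Differentiating $Z:=V(z)$ along $\dot z=J_r z+e_r(l(t)u_r(z)+\psi(t))$ and using $u_r\partial_r V\le 0$ together with \eqref{est-V1} gives
\[
\dot Z\le-\rho\,Z^{1+\frac{\kappa}{2}}-(l(t)-1)|u_r\partial_r V|+|\psi|\,|\partial_r V|.
\]
Applying Lemma~\ref{lem:urVr} to $|\partial_r V|$ with $\eta=\eta_0\tilde\mu(t)$, where $\eta_0\in(0,1)$ is small and $\tilde\mu(t)$ is chosen so that $\tilde\psi(t)\tilde\mu(t)^{1+\kappa/2}=\varepsilon^{1+\kappa/2}$, and then shrinking $\eta_0$ enough so that the $C_0|\psi|\eta^{1+\kappa/2}$ contribution is absorbed into $\rho\,Z^{1+\kappa/2}$ whenever $Z>\varepsilon/2$, I would arrive at
\[
\dot Z\le-\frac{c_r}{2}\Big(Z^{1+\frac{\kappa}{2}}-(\varepsilon/3)^{1+\frac{\kappa}{2}}\Big)-|u_r\partial_r V|\Big(l(t)-1-\frac{C(\gamma,\psi)\,\tilde\psi(t)^{1+\gamma_r(1+\kappa/2)}}{\varepsilon^{\gamma_r(1+\kappa/2)}}\Big),
\]
valid as long as $Z(t)>\varepsilon/2$.

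By the growth assumption on $l$, the parenthesized coefficient multiplying $|u_r\partial_r V|$ becomes non-negative for all $t\ge t_1$ large enough; hence $\dot Z\le-\bar c_r Z^{1+\kappa/2}$ on $\{t\ge t_1,\ Z(t)>\varepsilon/2\}$. Since $1+\kappa/2\in(1/2,1)$, this differential inequality would drive $Z$ to zero in finite time if $Z$ remained above $\varepsilon/2$ forever, a contradiction. Therefore $\bar t$ exists and is finite. Existence of $z(\cdot)$ on $[0,\bar t]$ then follows from Carathéodory local existence applied to the continuous right-hand side of \eqref{eq:ST-feedback*}, combined with the a priori boundedness of $Z$ furnished by the Lyapunov inequality above: on every bounded subinterval, $l$ and $\psi$ are bounded, so $Z$ (and hence $z$, by properness of the homogeneous $V$) stays in a compact set, while $\xi\equiv 0$ is trivially bounded. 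The main technical obstacle is carrying out the $\eta$-splitting of Lemma~\ref{lem:urVr} uniformly in $\varepsilon$ and pinning down the $\varepsilon$-dependence of $C(\gamma,\psi)$ precisely enough to check that the growth of $l$ really does dominate $\tilde\psi^{1+\gamma_r(1+\kappa/2)}/\varepsilon^{\gamma_r(1+\kappa/2)}$.
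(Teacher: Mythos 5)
Your argument is essentially the paper's: the paper proves this lemma simply by observing that on $[0,\bar t]$ one has $L_2\equiv 0$ hence $\xi\equiv 0$, and $\psi$ grows at most affinely, so Lemma~\ref{le:existence1} applies verbatim with $\mu(\cdot)\equiv\varepsilon$ and the known majorant $\tilde\phi(t)=t^2+1$ --- which is exactly the computation you unroll by hand. One nit: the growth condition on $l$ should be stated with respect to a \emph{known} majorant (e.g.\ $1+t$ or $t^2+1$, as the paper does), since your $\tilde\psi$ depends on the unknown $\psi(0)$ and $\psi_M$; because $\tilde\psi(t)\leq C(1+t)$ for some unknown $C$, the known-majorant version of \eqref{eq:lt} implies the condition you actually use, so the proof is unaffected.
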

\begin{proof}
This result is a particular case off Lemma~\ref{le:existence1} in the case 
$\mu(\cdot)\equiv \varepsilon$ and  $\tilde{\phi}(t)=t^2+1$ and any choice of $l(\cdot)$ satisfying \eqref{eq:lt}.
 \end{proof}
 The next step consists in studying trajectories \eqref{eq:ST-feedback*} 
 inside ${\cal{V}}_\varepsilon$, while starting at $\bar{z}=z(\bar{t})$ 
 where $V(\bar{z})=\varepsilon/2$. Such a trajectory is defined on 
 a non-trivial interval $[\bar{t},T)$, with $T\leq \infty$.
 
 To proceed, we rely on a change of the state variable $z$.
 More precisely, we consider 
 \begin{equation}\label{eq:z-to-y}
 y:=\delta_{L^{1/2}(z)}(z),\quad 
 \end{equation}
 where we have dropped the index $\varepsilon$ in $L$.
 We next transform \eqref{eq:ST-feedback*} as follows.
\begin{lemme}\label{le:dynY}
Inside ${\cal{V}}_\varepsilon$, if $(z,\phi)$ is a trajectory of 
\eqref{eq:ST-feedback*}, then $(y,\phi)$ with $y=\delta_{L^{1/2}(z)}(z)$ is a trajectory of the following dynamical system
\beqnum\label{eq:ST-feedbackY}
\begin{array}{ccl}
L^{\frac\kappa2}\dot y(t)&=& Q+\frac1{2\varepsilon}\langle \nabla V(y),Q\rangle D_py(t),\\
L^{\frac\kappa2}\dot \phi(t)& =& -L^{\frac{p_{r+1}}2}\partial_r V(y(t))+
L^{\frac\kappa2}\dot\psi(t),\\
\end{array}
\eeqnum
where 
$$
L=L(z(t)),\quad Q=J_ry(t)+\gamma\ e_r\Big(\bar{c}L^{-\kappa/2}u_r(y(t))+L^{\frac{p_{r+1}}2} \phi(t)\Big).
$$
\end{lemme}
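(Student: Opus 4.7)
The plan is to prove the lemma by a direct coordinate-wise computation built on the defining relation $y_i = L^{p_i/2} z_i$, then to rearrange using the homogeneity identities gathered in Lemma~\ref{lem:basic1}. The starting point is to differentiate this relation in time:
\[
\dot y_i = L^{p_i/2}\dot z_i + \frac{p_i\,\dot L}{2L}\,y_i,
\]
so that after multiplication by $L^{\kappa/2}$ the target identity reduces to two independent claims, namely that $L^{(p_i+\kappa)/2}\dot z_i$ is the $i$-th component of $Q$, and that $\dot L/(2L^{1-\kappa/2})$ equals $\langle \nabla V(y),Q\rangle/(2\varepsilon)$.

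The first claim I would handle by splitting on $i$. For $i<r$, $\dot z_i = z_{i+1} = L^{-p_{i+1}/2}y_{i+1}$, and $p_{i+1}-p_i=\kappa$ gives at once $L^{(p_i+\kappa)/2}\dot z_i = y_{i+1}$. For $i=r$, I plug in $\dot z_r = \gamma_m(L_1 u_r(z)+\phi)$, use $L_1=\bar c L^{-\kappa/2}$ and, via the homogeneity of $u_r$ of degree $p_{r+1}$, $u_r(z)=L^{-p_{r+1}/2}u_r(y)$; then the identity $p_{r+1}=p_r+\kappa$ collapses the exponents and yields $L^{(p_r+\kappa)/2}\dot z_r = \gamma_m\bigl(\bar c L^{-\kappa/2}u_r(y)+L^{p_{r+1}/2}\phi\bigr)$. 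Compactly, this step amounts to showing $Q = L^{\kappa/2}\,\delta_{L^{1/2}}(\dot z)$.

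For the second claim, the starting point is $\dot L = L^2 \dot V(z)/\varepsilon$, obtained by differentiating $L=\varepsilon/(\varepsilon-V(z))$. To recast $\dot V(z)=\langle \nabla V(z),\dot z\rangle$ in the $(y,Q)$ variables I would use Lemma~\ref{lem:basic1}~$(c)$ in coordinate form, $\partial_i V(y) = L^{1-p_i/2}\partial_i V(z)$. Paired with $\delta_{L^{1/2}}(\dot z)_i = L^{p_i/2}\dot z_i$, the exponents of $L$ telescope and yield $\langle \nabla V(y),\delta_{L^{1/2}}(\dot z)\rangle = L\,\dot V(z)$; combining with the identification $Q = L^{\kappa/2}\delta_{L^{1/2}}(\dot z)$ from the previous step then gives $\langle \nabla V(y),Q\rangle = L^{1+\kappa/2}\dot V(z)$. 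Substituting back gives $\dot L = L^{1-\kappa/2}\langle \nabla V(y),Q\rangle/\varepsilon$, which is exactly what is needed in front of $D_p y$.

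Finally, the second line of \eqref{eq:ST-feedbackY} follows by the same mechanism: I would start from $\dot\phi = -L\,\partial_r V(z)+\dot\psi$ (using $L_2=L$ inside $\mathcal{V}_\varepsilon$), substitute $\partial_r V(z) = L^{(p_r-2)/2}\partial_r V(y)$, and multiply through by $L^{\kappa/2}$, which produces the announced factor $L^{p_{r+1}/2}=L^{(p_r+\kappa)/2}$ in front of $\partial_r V(y)$. No estimate is required anywhere; the only nontrivial aspect is the careful tracking of homogeneity weights, with every cancellation systematically driven by $p_{r+1}-p_r=\kappa$ together with the fact that $V$ is homogeneous of degree $2$.
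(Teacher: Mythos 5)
Your proof is correct and follows essentially the same route as the paper: differentiating $y=\delta_{L^{1/2}(z)}(z)$, identifying $Q=L^{\kappa/2}\delta_{L^{1/2}}(\dot z)$ via the homogeneity of $J_rz$ and $u_r$, rewriting the $\dot L$ (equivalently $\langle\nabla L,\dot z\rangle$) term through the Euler-type identity $\partial_iV(y)=L^{1-p_i/2}\partial_iV(z)$, and using the degree $2-p_r$ homogeneity of $\partial_rV$ for the $\phi$-equation. The only difference is cosmetic (coordinate-wise bookkeeping instead of the paper's vectorial dilation identities), so no further comment is needed.
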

\begin{proof} By taking the time derivative on \eqref{eq:z-to-y}, one gets that
\begin{equation}\label{eq:dynY2}
\dot y=\delta_{L^{1/2}}(\dot z)+\frac{\langle \nabla L,\dot z\rangle}{2L}\ D_py.
\end{equation}
Using the various homogeneity properties given in Lemma~\ref{lem:basic1}, one gets
\begin{eqnarray}
\delta_{L^{1/2}}(\dot z)&=&\delta_{L^{1/2}}J_r(\delta_{L^{-1/2}}(y))+\gamma
\delta_{L^{1/2}}(e_r)\Big(\bar{c}L^{-\kappa/2}u_r(\delta_{L^{-1/2}}(y))+\phi\Big)\nonumber\\
&=&L^{-\kappa/2}J_ry+\gamma L^{p_r/2}e_r\Big(\bar{c}L^{-\kappa/2-p_{r+1}/2}u_r(y)+\phi\Big)\nonumber\\
&=&L^{-\kappa/2}\Big(J_ry+\gamma e_r\big(\bar{c}L^{-\kappa/2}u_r(y)+L^{\frac{p_{r+1}}2}\phi\big)\Big).\label{eq:dynY10}
\end{eqnarray}
On the other hand, it holds 
\begin{eqnarray}
\frac{\langle \nabla L,\dot z\rangle}{2L}&=&
\frac{\varepsilon}{2(\varepsilon-V(z))^2L}\langle \nabla V(z),\dot z\rangle
\nonumber\\
&=&\frac{L}{2\varepsilon}\langle \nabla V(\delta_{L^{-1/2}}(y)),\dot z\rangle
=\frac1{2\varepsilon}\langle \nabla V(y),\delta_{L^{1/2}}(\dot z)\rangle.
\label{eq:dynY20}
\end{eqnarray}
Putting together \eqref{eq:dynY10} and \eqref{eq:dynY20} in
\eqref{eq:dynY2}, one gets the first equation of \eqref{eq:ST-feedbackY}.

As for the dynamics of $\psi$, one has after using the fact that $\partial_r V$
is homogeneous of degree $2-p_r$ with respect to 
$(\delta_\varepsilon)_{\varepsilon}$
\begin{eqnarray*}
\dot \phi&=&-L\partial_rV(\delta_{L^{-1/2}}(y))+\dot\psi\\
&=&-L^{p_r/2}\partial_rV(y)+\dot\psi,
\end{eqnarray*}
which yields the second equation of \eqref{eq:ST-feedbackY}.
 \end{proof}
 For simplicity, we denote $V(y)$ by 
$Y$ and drop the (obvious) arguments in several quantities below. In order to determine the dynamics of $Y$, we need the following.
\begin{lemme}\label{lem:L-Y}
With the previous notations, one has that
\begin{equation}\label{eq:L-Y}
L=1+\tfrac{Y}\varepsilon.
\end{equation}
\end{lemme}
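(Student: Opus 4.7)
The plan is to exploit the degree-2 homogeneity of $V$ together with the explicit form of the barrier function $L_\varepsilon$, and then solve a one-line algebraic identity.

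First I would recall from Definition~\ref{def1}(iii) that $V$ is homogeneous of degree $2$ with respect to the family of dilations $(\delta_\vep)_{\vep>0}$. Applied to the change of variable \eqref{eq:z-to-y}, namely $y=\delta_{L^{1/2}(z)}(z)$, this yields
\begin{equation*}
Y \;=\; V(y) \;=\; V\!\bigl(\delta_{L^{1/2}}(z)\bigr) \;=\; (L^{1/2})^{2}\,V(z) \;=\; L\cdot V(z),
\end{equation*}
so in particular $V(z)=Y/L$.

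Next I would plug this into the definition of the barrier function $L=L_\varepsilon(z)=\varepsilon/(\varepsilon-V(z))$ coming from \eqref{eq:L}. Cross-multiplying gives $L(\varepsilon-V(z))=\varepsilon$, i.e. $L\varepsilon - L\,V(z) = \varepsilon$. Substituting $L\,V(z)=Y$ from the previous step, we get $L\varepsilon - Y = \varepsilon$, and dividing by $\varepsilon$ yields exactly
\begin{equation*}
L \;=\; 1 + \tfrac{Y}{\varepsilon},
\end{equation*}
which is \eqref{eq:L-Y}.

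There is essentially no obstacle here: the lemma is a purely algebraic consequence of the degree-2 homogeneity of $V$ and the rational form of $L_\varepsilon$. The only thing to double-check is the well-posedness of the change of variable, i.e.\ that $L(z(t))$ is positive and finite on the regime under consideration; but this is automatic as long as $V(z)<\varepsilon$, which is guaranteed on $[\bar t,T)$ by the preceding analysis in Lemma~\ref{le:existence2} (where trajectories enter ${\cal V}_{\varepsilon/2}$ at $\bar t$ and are then studied inside ${\cal V}_\varepsilon$). Hence the identity \eqref{eq:L-Y} holds throughout the interval of interest.
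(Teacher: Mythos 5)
Your proof is correct and follows essentially the same route as the paper: both use the degree-$2$ homogeneity of $V$ under $(\delta_\vep)_{\vep>0}$ to relate $V(z)$ and $Y=V(y)$ (the paper writes $V(z)=V(\delta_{L^{-1/2}}(y))=L^{-1}Y$, you write $Y=LV(z)$, which is the same identity), and then solve the resulting algebraic equation coming from $L=\varepsilon/(\varepsilon-V(z))$. Your added remark on well-posedness ($V(z)<\varepsilon$ on the regime considered) is consistent with the paper's setting and does not change the argument.
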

 \begin{proof}
 Recall that
 \begin{eqnarray*}
 L=\frac{\varepsilon}{\varepsilon-V(z)}=\frac{\varepsilon}{\varepsilon-V(\delta_{L^{-1/2}}(y))}=\frac{\varepsilon}{\varepsilon-L^{-1}V(y)},
 \end{eqnarray*}
 from which \eqref{eq:L-Y} follows.
  \end{proof}
  We can now compute the time derivative of $Y$ along trajectories of \eqref{eq:ST-feedbackY}.
  \begin{lemme}\label{lem:dotY}
With the previous notations, one has that
\begin{eqnarray}
L^{\frac\kappa2}\frac{d}{dt}\Big(\varepsilon\ln(\varepsilon+Y)\Big)&=&
-\rho Y^{1+\frac\kappa2}-\gamma\vert u_r(y)\partial_rV(y)\vert
(\bar{c}L^{-\kappa/2}-\tfrac1{\gamma})\nonumber\\
&+&\gamma L^{p_{r+1}}\partial_rV(y)\phi.\label{eq:dotY}
\end{eqnarray}
\end{lemme}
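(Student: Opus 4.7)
The plan is to compute the derivative of the logarithmic quantity by pairing the transformed dynamics of Lemma~\ref{le:dynY} with $\nabla V(y)$, using Lemma~\ref{lem:L-Y} to relate $Y$ to $L$ and Euler's relation from Lemma~\ref{lem:basic1}$(d)$ to collapse the correction term produced by the change of variables. First, since by Lemma~\ref{lem:L-Y} one has $\varepsilon+Y=\varepsilon L$, it follows immediately that
$$
\frac{d}{dt}\big(\varepsilon\ln(\varepsilon+Y)\big)=\frac{\varepsilon\dot Y}{\varepsilon+Y}=\frac{\dot Y}{L},
$$
so the left-hand side of \eqref{eq:dotY} equals $L^{\kappa/2-1}\dot Y$ and everything reduces to computing this quantity.

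Next I would take the inner product of the first line of \eqref{eq:ST-feedbackY} with $\nabla V(y)$, which gives
$$
L^{\kappa/2}\dot Y=\langle\nabla V(y),Q\rangle\Big(1+\tfrac{1}{2\varepsilon}\langle\nabla V(y),D_p y\rangle\Big).
$$
By the Euler identity of Lemma~\ref{lem:basic1}$(d)$, $\langle\nabla V(y),D_p y\rangle=2Y$, and by Lemma~\ref{lem:L-Y} the parenthesis equals $1+Y/\varepsilon=L$. Dividing by $L$ yields the clean identity $L^{\kappa/2-1}\dot Y=\langle\nabla V(y),Q\rangle$, so what remains is to expand the right-hand side using the definition of $Q$ together with the (sAHFP) structure.

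To conclude I would split $\langle\nabla V(y),Q\rangle$ into its three natural pieces (the drift $J_r y$, the feedback term $\gamma\bar c L^{-\kappa/2}u_r(y)e_r$, and the disturbance term $\gamma L^{p_{r+1}/2}\phi\, e_r$). Applying identity \eqref{est-V1} to the pure chain closed by $u_r$ yields
$$
\langle\nabla V(y),J_r y\rangle=-\rho Y^{1+\kappa/2}-u_r(y)\partial_r V(y),
$$
and Definition~\ref{def1}$(iv)$ allows me to rewrite $u_r(y)\partial_r V(y)=-|u_r(y)\partial_r V(y)|$. Grouping the two contributions involving $|u_r\partial_r V|$ produces the bracket $\gamma\big(\bar c L^{-\kappa/2}-1/\gamma\big)$ with the correct sign, and the remaining perturbation term matches the last summand of \eqref{eq:dotY}, yielding the announced identity.

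The only delicate point is the cancellation between the factor $L$ coming from Euler's relation (itself inherited from the coordinate change $y=\delta_{L^{1/2}}(z)$) and the factor $1/L$ coming from the logarithmic derivative; this is what makes the particular combination $\varepsilon\ln(\varepsilon+Y)$ the right object to differentiate and what will make the formula useful in subsequent Lyapunov-type estimates for Case~2. Apart from this bookkeeping, the calculation is direct, with all signs fixed by Item $(iv)$ of Definition~\ref{def1}.
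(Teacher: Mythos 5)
Your proof is correct and follows essentially the same route as the paper's own (very terse) argument: pair the first equation of \eqref{eq:ST-feedbackY} with $\nabla V(y)$, use the Euler relation of Lemma~\ref{lem:basic1}$(d)$ together with $\varepsilon+Y=\varepsilon L$ from Lemma~\ref{lem:L-Y} to cancel the factor $L$ against the logarithmic derivative, and then expand $\langle\nabla V(y),Q\rangle$ via \eqref{est-V1} and Item $(iv)$ of Definition~\ref{def1}. The only discrepancy is that expanding $Q$ as defined in Lemma~\ref{le:dynY} yields the perturbation term $\gamma L^{\frac{p_{r+1}}{2}}\partial_rV(y)\phi$ rather than $\gamma L^{p_{r+1}}\partial_rV(y)\phi$ as printed in \eqref{eq:dotY}; this appears to be a typographical inconsistency in the paper rather than a gap in your argument.
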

 \begin{proof} From the first equation of \eqref{eq:ST-feedbackY}, one gets
 $$
 L^{\frac\kappa2}\dot Y=\langle \nabla V(y),Q\rangle\Big(1+\frac{\langle \nabla V(y),D_py\rangle}{2\varepsilon}\Big)=\langle \nabla V(y),Q\rangle(1+\tfrac{Y}\varepsilon),
 $$
 where we have used \eqref{eq:grad}. One then concludes easily.
  \end{proof}
  Using the fact that $\gamma$ is constant, we prove that trajectories of
  \eqref{eq:ST-feedback*} inside ${\cal{V}}_\varepsilon$ are well defined for 
  all times $t\geq \bar{t}$.
\begin{lemme}\label{lem:L-Y1}
With the notations above, trajectories of \eqref{eq:ST-feedback*} are 
defined for all $t\geq \bar{t}$.
 \end{lemme}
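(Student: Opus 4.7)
The plan is to argue by contradiction, adapting the strategy of Lemma~\ref{le:existence}. Suppose a trajectory $(z(\cdot),\phi(\cdot))$ of \eqref{eq:ST-feedback*} has maximal existence interval $[\bar t,t_*)$ with $t_*<\infty$. Because the right-hand side of \eqref{eq:ST-feedback*} is continuous on $\{V(z)<\varepsilon\}\times\mathbb{R}$, extension can fail only if $V(z(t))\to\varepsilon$ or $|\phi(t)|\to\infty$ as $t\to t_*^-$. The second failure mode can be ruled out independently: from $\dot\phi=-L_2(t,z)\partial_rV(z)+\dot\psi$ together with $|\dot\psi|\leq\psi_M$, if $V(z(t))$ stays in $[0,\varepsilon-\delta]$ for some $\delta>0$, then both $L_2=L_\varepsilon(z)$ and $\partial_rV(z)$ remain bounded on this sub-level set, so $\phi$ is Lipschitz in time and cannot escape on a bounded interval.

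It therefore suffices to exclude $V(z(t))\to\varepsilon$, which in the transformed variables of Lemmas~\ref{le:dynY}--\ref{lem:L-Y} is equivalent (via \eqref{eq:L-Y}) to $Y(t)=V(y(t))\to\infty$. I would introduce a combined Lyapunov candidate of the form
$$
W(y,\phi):=\varepsilon\ln(\varepsilon+Y)+\frac{1}{2\gamma_m}\phi^{2},
$$
possibly inserting an $L^{\alpha}$-weight on $\phi^{2}$ to match powers of $L$, and compute $L^{\kappa/2}\dot W$ by combining Lemma~\ref{lem:dotY} with the $\phi$-equation in \eqref{eq:ST-feedbackY}. The super-twisting structure is precisely engineered so that the indefinite cross-term $\gamma L^{p_{r+1}}\partial_rV(y)\phi$ appearing in \eqref{eq:dotY} cancels against the corresponding term generated by $\tfrac{1}{\gamma_m}\phi\dot\phi$. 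What remains on the right-hand side is the dissipation $-\rho Y^{1+\kappa/2}$, the favourable barrier contribution $-\gamma|u_r(y)\partial_rV(y)|(\bar cL^{-\kappa/2}-1/\gamma)$ (strictly negative for $L$ large since $-\kappa/2>0$), and the forcing $\tfrac{1}{\gamma_m}\phi\dot\psi$, which is bounded by $\psi_M|\phi|/\gamma_m$ and absorbed into the $\phi^{2}$ part of $W$ by Young's inequality.

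The resulting differential inequality forces at most linear growth of $W$ on $[\bar t,t_*)$, hence both $Y$ (equivalently $L$) and $\phi$ remain bounded, contradicting the maximality of $t_*$. The main obstacle I anticipate is executing the cancellation so that the two $\phi\partial_rV(y)$ cross-terms match exactly in their powers of $L$; should a residual remain, the exponent $\alpha$ in the $L^{\alpha}\phi^{2}$ weight must be carefully calibrated, and the estimate \eqref{eq:ur-pVr-Z} can be invoked to dominate any leftover residue by the already-established dissipative and barrier contributions.
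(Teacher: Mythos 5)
Your proposal is correct and follows essentially the same route as the paper: the paper's proof works with exactly the combined function $F(Y,\phi)=\varepsilon\ln(\varepsilon+Y)+\tfrac{\gamma_m}{2}\phi^2$, uses the super-twisting cancellation of the cross term from \eqref{eq:dotY} against the $\phi$-equation of \eqref{eq:ST-feedbackY}, bounds the remainder by $\gamma_m\psi_M\vert\phi\vert\leq C(1+F)^{1/2}$, and concludes $F\leq C(1+t^2)$, so no finite-time blow-up. The only calibration point is that the weight on $\phi^2$ must be $\tfrac{\gamma_m}{2}$ (not $\tfrac{1}{2\gamma_m}$) for the cross terms to cancel exactly, and no extra $L^{\alpha}$-weight is needed since the powers of $L$ already match.
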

 \begin{proof} If $z(\cdot)$ is a trajectory of \eqref{eq:ST-feedback*} inside 
 ${\cal{V}}_\varepsilon$, then, using \eqref{eq:dotY} and \eqref{eq:ST-feedbackY}, one gets as long as $z(\cdot)$ is defined, that
 \begin{eqnarray}\label{eq:lyap0}
 &L^{\frac\kappa2}&\frac{d}{dt}\Big(\varepsilon\ln(\varepsilon+Y)+\frac{\gamma_m}2\phi^2\Big)\leq -c_rY^{1+\frac\kappa2}\nonumber\\
 &-&\gamma_m\vert u_r(y)\partial_rV(y)\vert(\bar{c}L^{-\kappa/2}-
 \tfrac1{\gamma_m})+L^{\frac\kappa2}\gamma_m\psi_M\vert \phi\vert.\label{eq:lyap0}
 \end{eqnarray}
 Hence, if $L^{-\kappa/2}\geq \frac1{\gamma_m\bar{c}}$, the right-hand side of the above inequality is less than or equal to $L^{\frac\kappa2}\gamma_m\psi_M\vert \phi\vert$ and otherwise $Y$ is bounded by some $C_2(\gamma)$. In any case, after setting
 $$
 F(Y,\phi)=\varepsilon\ln(\varepsilon+Y)+\frac{\gamma_m}2\phi^2,
 \quad (Y,\phi)\in \R_+^*\
 $$
one deduces that
 \begin{eqnarray}
 \frac{d}{dt}F&\leq& C_2(\gamma,z(\cdot))+\gamma_m\psi_M\vert \phi\vert
 \nonumber\\
  &\leq&C_3(\gamma,z(\cdot))(1+F)^{1/2},\label{eq:defined}
 \end{eqnarray}
 as long as the trajectory $(y,\phi)$ of \eqref{eq:ST-feedbackY} is defined.
 Clearly \eqref{eq:defined} yields that 
 $
 F(Y(t),\phi(t))\leq C_4(\gamma,,z(\cdot))(1+t^2),
 $ 
 as long as that trajectory is defined for $t\geq \bar{t}$. It immediately follows that $(y,\phi)$ does not blow up in finite time $t$. This concludes the proof of the lemma.
 \end{proof}
 We next prove the last part of Theorem~\ref{theorem1*}, i.e. we get the following.
 \begin{lemme} Let $z(\cdot)$ be a trajectory of \eqref{eq:ST-feedback1}, 
 inside ${\cal{V}}_\varepsilon$. Then, there exist a time $t_0$ and two positive constants 
$Z_*<\varepsilon$ and $\xi_* >0$ depending on $z(\cdot)$, the bounds 
$\gamma_m$, $\psi_M$ and $\varepsilon$ such that, $V(z(t))\leq Z_*$ and 
$|\phi(t)|\leq \xi_*$ for $t\geq t_0$.
\end{lemme}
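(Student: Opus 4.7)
My plan is to sharpen the polynomial growth estimate $F(t)\leq C(1+t^2)$ of Lemma~\ref{lem:L-Y1} into a uniform ultimate bound $F(t)\leq F_\ast$ for $t$ beyond some $t_0$, where $F(Y,\phi)=\varepsilon\ln(\varepsilon+Y)+\tfrac{\gamma_m}{2}\phi^2$ and $Y=V(y)$ with $y=\delta_{L^{1/2}(z)}(z)$. Once such a bound is in hand, the conclusion is immediate: from $Y(t)\leq Y_\ast:=e^{F_\ast/\varepsilon}-\varepsilon$ and the relation $V(z)=\varepsilon Y/(\varepsilon+Y)$ obtained in Lemma~\ref{lem:L-Y}, one gets $V(z(t))\leq Z_\ast:=\varepsilon Y_\ast/(\varepsilon+Y_\ast)<\varepsilon$, and from $\phi(t)^2\leq 2F_\ast/\gamma_m$ one gets $|\phi(t)|\leq \xi_\ast:=\sqrt{2F_\ast/\gamma_m}$.

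The approach is to show that $\dot F$ becomes strictly negative once $F$ exceeds a suitable trajectory-dependent threshold. I would first treat the regime where $Y$ is large. Dividing \eqref{eq:lyap0} by $L^{\kappa/2}$ and using $L=1+Y/\varepsilon$ together with $\kappa<0$ yields $L^{-\kappa/2}Y^{1+\kappa/2}\geq \varepsilon^{\kappa/2}Y$ for $Y\geq \varepsilon$, so the first dissipative contribution is bounded below by $c_r\varepsilon^{\kappa/2}Y$. The forcing $\gamma_m\psi_M|\phi|$ is controlled by $\psi_M\sqrt{2\gamma_m F}$, which along directions of growing $Y$ behaves only like $\sqrt{\ln Y}$. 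Hence there is a threshold $Y_1=Y_1(\gamma_m,\psi_M,\varepsilon)$ such that $\dot F\leq -\tfrac{c_r}{2}\varepsilon^{\kappa/2}Y$ whenever $Y\geq Y_1$, precluding $Y$ from becoming unbounded. The complementary regime $\{Y\leq Y_1,\,|\phi|\text{ large}\}$ is handled via the super-twisting coupling built into \eqref{eq:ST-feedback*}: since $\dot z_r$ contains $\phi$ explicitly while $L_1u_r(z)$ is bounded on the compact set $\{V(z)\leq \varepsilon Y_1/(\varepsilon+Y_1)\}$, a sizeable $|\phi|$ drives $z$ out of this set within a controlled time, after which the first estimate applies. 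A book-keeping of the alternating excursions between the two regimes yields the uniform bound $F_\ast$.

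The main obstacle is precisely this second regime: the inequality \eqref{eq:lyap0} on its own does not furnish a sign-definite decrease of $F$ when $Y$ is small and $|\phi|$ is large, because the algebraic cancellation that produced \eqref{eq:lyap0} has already absorbed the super-twisting integrator action $-L\partial_r V(z)$ appearing in $\dot\phi$. A possibly cleaner alternative is to augment $F$ by a sign-cross term between $\phi$ and $\partial_r V(z)$, in the spirit of the strict Lyapunov constructions known for the classical super-twisting algorithm, and to show that the augmented function satisfies an ISS-type inequality with $\dot\psi$ as disturbance; ultimate boundedness of $F$ then follows from standard comparison arguments.
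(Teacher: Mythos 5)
Your reduction of the lemma to an ultimate bound on $F(Y,\phi)=\varepsilon\ln(\varepsilon+Y)+\tfrac{\gamma_m}{2}\phi^2$ is sound (via $V(z)=\varepsilon Y/(\varepsilon+Y)$ from Lemma~\ref{lem:L-Y}, which indeed gives $Z_*<\varepsilon$), and your treatment of the regime $Y\geq Y_1$ is essentially correct. The genuine gap is the regime where $Y$ is small and $|\phi|$ is large, and you have diagnosed it yourself: \eqref{eq:lyap0} contains no negative term in $|\phi|$, so $F$ alone cannot decrease there. Your proposed repair --- a large $|\phi|$ expels $z$ from the small-$Y$ set in time of order $1/|\phi|$, so $F$ gains only a bounded amount per excursion, and ``book-keeping of alternating excursions'' then yields a uniform bound --- does not close. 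Nothing in that scheme forces a net decrease of $F$ (equivalently of $|\phi|$) over an excursion cycle: the gain per visit to the small-$Y$ region is a fixed positive constant, the sojourn time in $\{Y\geq Y_1\}$ before re-entry is not bounded below, and the number of re-entries is not controlled, so $F$ could in principle ratchet upward. The mechanism that actually reduces $|\phi|$ is the integrator equation $\dot\phi=-L_2\,\partial_rV(z)+\dot\psi$ acting while $\partial_rV(z)$ and $\phi$ share the same sign; a counting argument on excursions of $Y$ cannot capture this, and to exploit it one needs a functional that couples $\phi$ to the state.

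That coupling is exactly what the paper's proof supplies, and it is what your closing sentence gestures at without carrying out: the paper augments $F$ by a saturated cross term, $G=F-2\gamma_m\psi_M\,s(\phi)\,z_r$ with $s$ an odd $C^1$ saturation, and shows, using \eqref{eq:dotY}, \eqref{eq:L-Y} and the $\phi$-dynamics, the estimate \eqref{eq:G}, namely $\tfrac{d}{dt}G\leq-\tfrac{\varepsilon c_r}{2}Y-\tfrac{\gamma_m\psi_M}{2}|\phi|+C_4$; since the cross term is bounded (using $|s|\leq1$ and the boundedness of $|z_r|$ on ${\cal V}_\varepsilon$), $G$ is bounded below and comparable to $F$, and its boundedness along trajectories yields the ultimate bounds on $Y$ and $|\phi|$, hence on $V(z)$ and $\phi$. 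The non-trivial work lives precisely in the steps you omit: extracting the sign-definite $-|\phi|$ contribution from $2\phi s(\phi)-|\phi|$, and absorbing the terms generated by differentiating the cross term (those involving $s'(\phi)\dot\phi\,z_r$ and the homogeneous bound on $L^{p_{r+1}/2}\partial_rV(y)$) into the dissipative term and a constant, as in \eqref{eq:lyap1}. As it stands, neither of your two routes is executed, so the proof is incomplete; developing the cross-term (ISS-type) argument would essentially reproduce the paper's proof.
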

 \begin{proof} 
Set 
$
 G(Y,\phi)=\varepsilon\ln(\varepsilon+Y)+\frac{\gamma_m}2\phi^2-2\gamma_m\psi_Ms(\phi)z_r,
 $
 where $s(\cdot)$ is an odd $C^1$ function of saturation type, i.e. non-decreasing, $\xi s(\xi)>0$ for nonzero $\xi$, $s'(0)>0$, $s(\xi)=\xi$ for $0\leq \xi\leq 1/2$ and $s(\xi)=1$ for $\xi\geq 1$. Note that one can assume that $\vert s(\cdot)\vert$ and $s'(\cdot)$ are bounded by one and to respectively. We will also use below the fact that $\vert z_r\vert$ is bounded by $\varepsilon$.
 
 Using \eqref{eq:ST-feedback*}, \eqref{eq:L-Y} and  \eqref{eq:dotY}, it holds for 
 every time $t\geq \bar{t}$  that
\begin{eqnarray}
L^{\frac\kappa2}\frac{d}{dt}G&\leq& -c_rY^{1+\frac\kappa2}+2\gamma_m\psi_M\bar{c}
 (1+\varepsilon)\nonumber\\&-&\gamma_m\vert u_r(y)\partial_rV(y)\vert(\bar{c}L^{-\kappa/2}
 -\tfrac1{\gamma_m})
 \nonumber\\&-&L^{\frac\kappa2}\gamma_m\psi_M(2\phi s(\phi)-\vert \phi\vert)
 \nonumber\\&+&2\gamma_m\psi_M\vert z_r\vert s'(\phi)\big(L^{\frac{p_{r+1}}2}\vert \partial_r V(y(t))\vert+
L^{\frac\kappa2}\psi_M\big).
 \label{eq:lyap1}
 \end{eqnarray}
 
 By taking into account several bounds on $\vert s(\cdot)\vert$, $s'(\cdot)$
 and $\vert z_r\vert$, one deduces after dividing \eqref{eq:lyap1} by $L^{\frac\kappa2}$ that there exist positive constants $C_j:=C_j(\gamma,\phi,z(\cdot))$, $3\leq j\leq 4$, so that, for every time
 $t\geq \bar{t}$, it holds 
 \begin{equation}\label{eq:G}
 \frac{d}{dt}G\leq -\tfrac{\varepsilon c_r}2Y-\tfrac{\gamma_m\psi_M}2\vert\phi\vert+C_4.
  \end{equation}
  In particular, one gets from the above that there exists $C_5:=C_5(\gamma,
  \phi)>0$ so that $G$ is decreasing along trajectories of \eqref{eq:ST-feedbackY}
 as soon as $G>C_5$. Since $G$ is bounded below by 
 $\varepsilon(\ln(\varepsilon)+\gamma_m\psi_M)$, one deduces that $G$ is bounded along any trajectory of \eqref{eq:ST-feedbackY}, concluding the proof of the lemma.
\end{proof}
  
\subsection{An example}

In order to point out the fact that the adaptive HOST algorithm \eqref{ST-feedback*} overcomes the problem of unbounded gain function $L(t,z)$, let us consider the same third order disturbed system with the same  function $\phi(t)$, however $\gamma$ is chosen as a constant $\gamma=2$ instead of a positive function. Here, the adaptive HOST algorithm \eqref{ST-feedback*} is employed where $L_1(t,z)$ and $L_2(t,z)$ are chosen to satisfy \eqref{eq:adaptgain33}-\eqref{eq:adaptgain4}, the AHFP is taken as in the previous algorithm, and $\varepsilon=0.1$.

\begin{figure}[h!]
\includegraphics[trim= 0.5cm 2.3cm 0.1cm 3.0cm, clip, width=9cm]{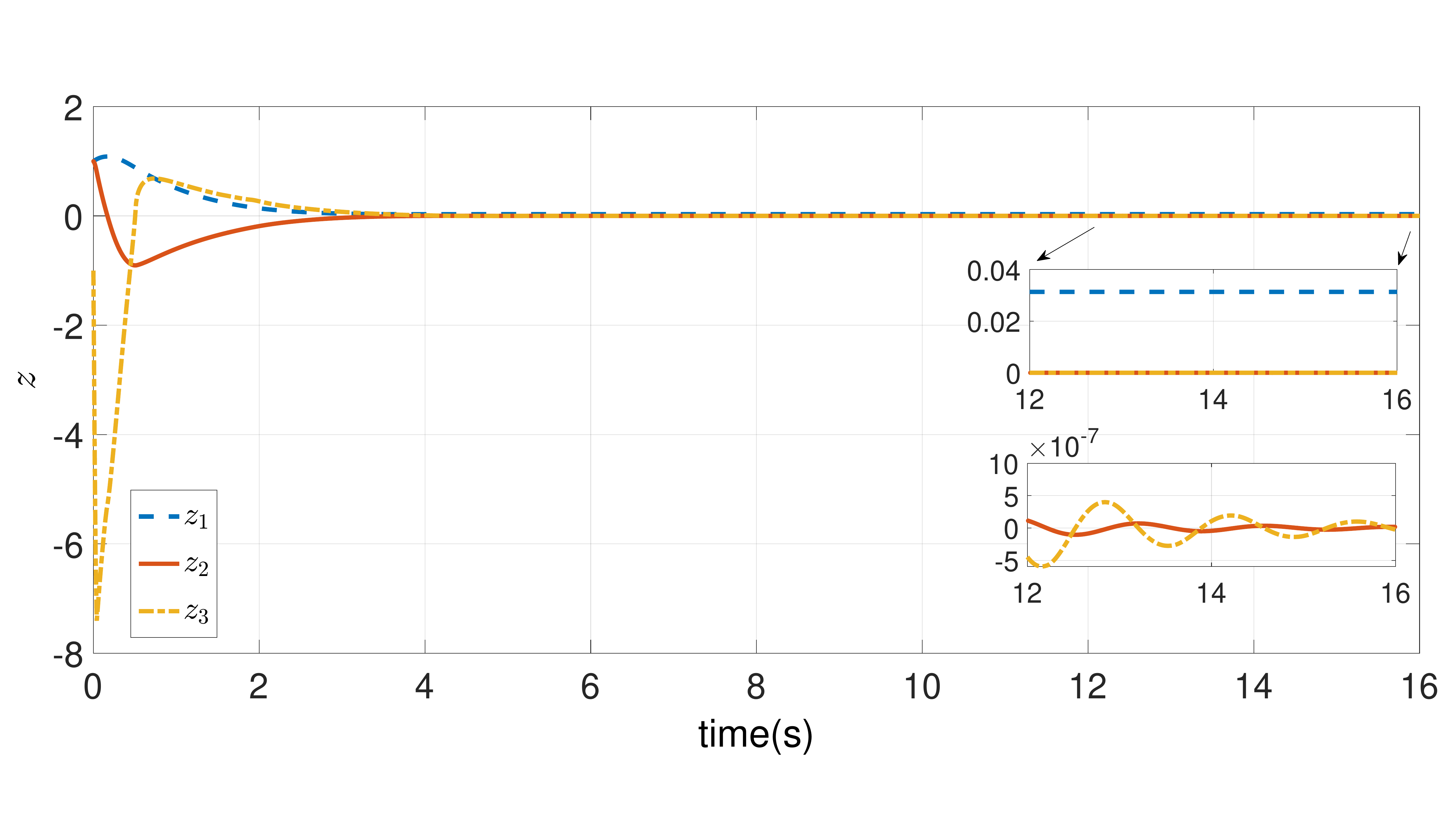}%
\caption{ The states $z_1$, $z_2$ and $z_3$ with the adaptive HOST algorithm \eqref{ST-feedback*}.}
\label{fig:statescase21}
\end{figure}

\begin{figure}[h!]
\includegraphics[trim= 0.5cm 2.3cm 0.1cm 3.0cm, clip, width=9cm]{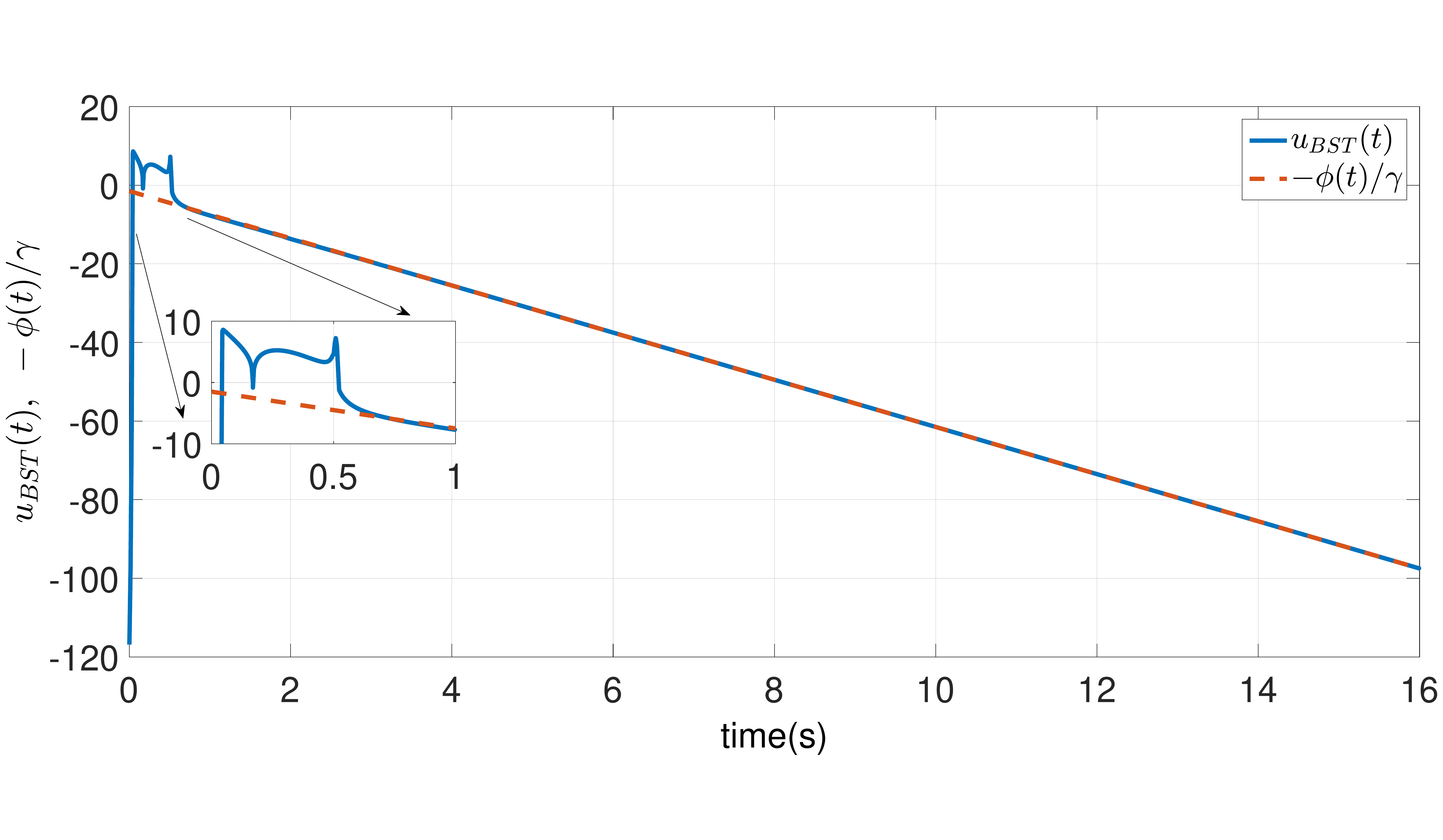}%
\caption{Evolution of the control signal $u_{BST}(t)$ and the uncertainties $-\phi(t)/\gamma$.}
\label{fig:estimcase21}
\end{figure}

\begin{figure}[h!]
\includegraphics[trim= 2.5cm 2.3cm 4.1cm 3.0cm, clip, width=9cm]{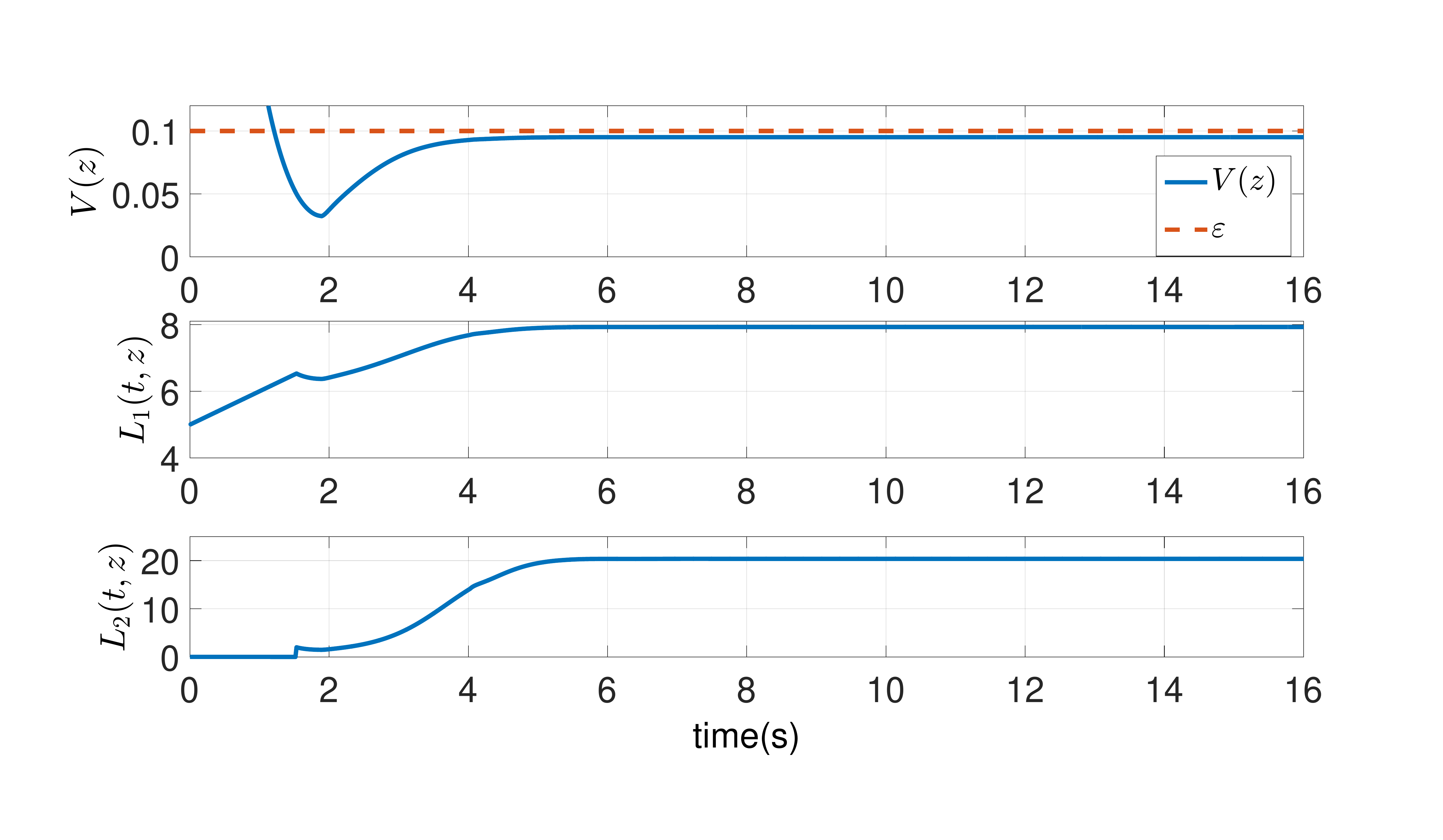}%
\caption{Evolution of the Lyapunov function $V(z)$ and the adaptive gains $L_1(t,z)$ and $L_2(t,z)$ using the adaptive HOST algorithm \eqref{ST-feedback*}.}
\label{fig:lyapunovcase21}
\end{figure}

\begin{figure}[h!]
\includegraphics[trim= 2.5cm 2.3cm 4.1cm 3.0cm, clip, width=9cm]{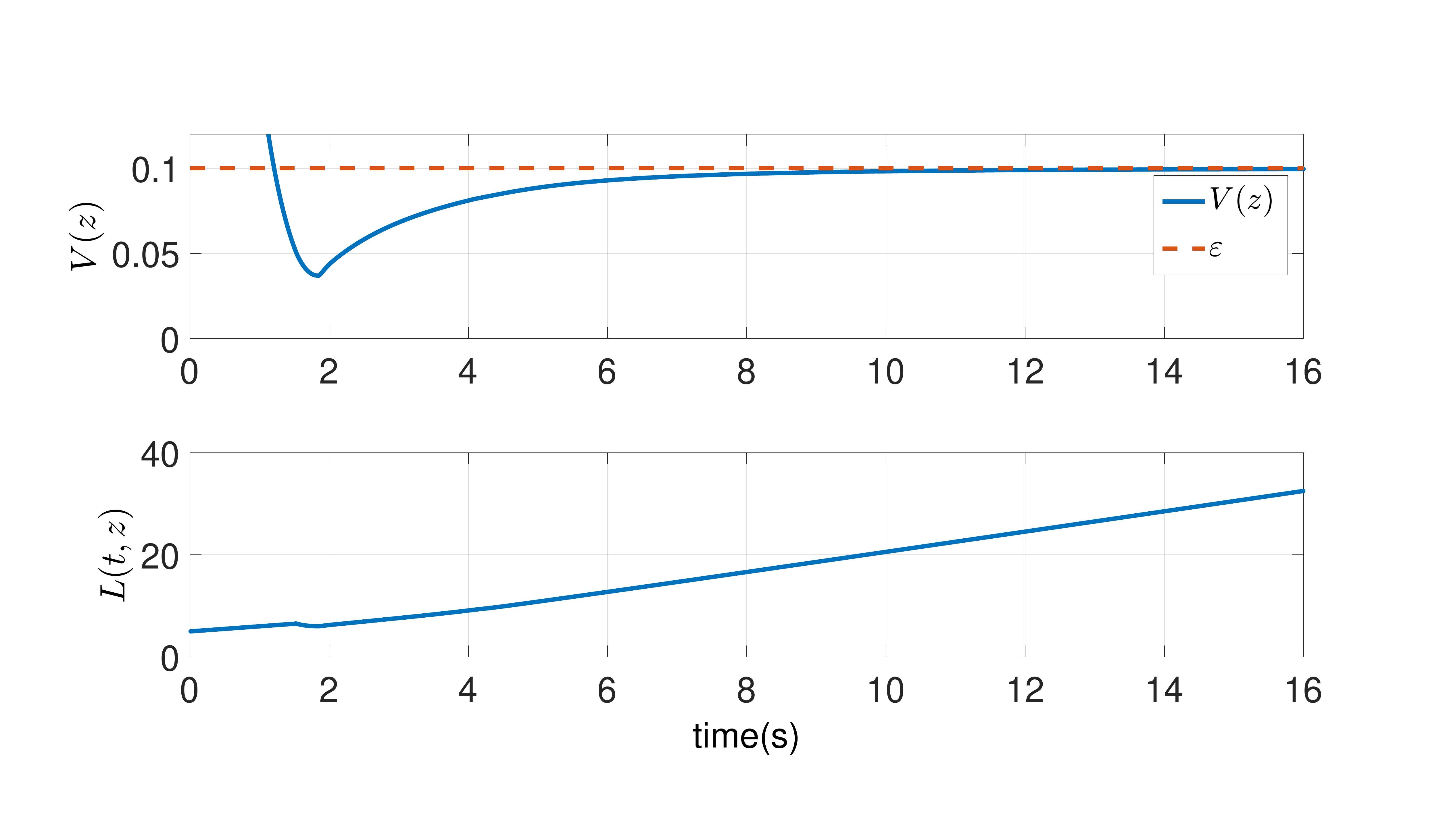}%
\caption{Evolution of the Lyapunov function $V(z)$ and the adaptive gains $L(t,z)$ using the adaptive controller \eqref{ST-feedback}.}
\label{fig:lyapunovcase11}
\end{figure}

The results are shown in Figs.~\ref{fig:statescase21}-\ref{fig:estimcase21}-\ref{fig:lyapunovcase21}. In Fig.~\ref{fig:statescase21}, it can be seen that the state variables $z_1$, $z_2$ and $z_3$ converge to some neighborhoods of zero. Fig.~\ref{fig:estimcase21} shows that the adaptive controller $u_{BST}(t)$ is continuous and it follows the uncertainties $-\phi(t)/\gamma$. Figs.~\ref{fig:lyapunovcase21}-\ref{fig:lyapunovcase11} illustrate the evolution of the Lyapunov functions and the adaptive gains using the adaptive HOST controller \eqref{ST-feedback*} and the adaptive controller \eqref{ST-feedback}. It can be noticed that for both algorithms, the Lyapunov function $V(z)$ stays in the predefined vicinity $V(z)<\varepsilon$ starting from the time instant $\bar{t}$. However, Fig.~\ref{fig:lyapunovcase21} shows that the adaptive gains $L_1(t,z)$ and $L_2(t,z)$ are bounded, which is not the case using the adaptive controller \eqref{ST-feedback} (see Fig.~\ref{fig:lyapunovcase11}). Hence, the specific feature of the adaptive HOST controller \eqref{ST-feedback*} is that it does not allow the adaptive gains to tend to infinity even if the uncertainties are unbounded.

\section{Conclusion}\label{sec:conclusion}

In this paper, two new barrier function-based adaptive CHOSMCs are developed for perturbed chains of integrators with unbounded perturbation. The first class can be viewed as a continuous generalization of the adaptive DHOSMC in the case of unbounded perturbation. In the general case of unbounded perturbation, the gains can grow to infinity. 
A second class of adaptive controllers refer to as adaptive HOST algorithm is developed for the case of Lipschitz unbounded perturbations. This second class ensures the boundedness of 
adaptive controllers’ gains.

\bibliography{AHOST_bib}

\begin{thebibliography}{10}
\providecommand{\url}[1]{#1}
\csname url@samestyle\endcsname
\providecommand{\newblock}{\relax}
\providecommand{\bibinfo}[2]{#2}
\providecommand{\BIBentrySTDinterwordspacing}{\spaceskip=0pt\relax}
\providecommand{\BIBentryALTinterwordstretchfactor}{4}
\providecommand{\BIBentryALTinterwordspacing}{\spaceskip=\fontdimen2\font plus
\BIBentryALTinterwordstretchfactor\fontdimen3\font minus
  \fontdimen4\font\relax}
\providecommand{\BIBforeignlanguage}[2]{{%
\expandafter\ifx\csname l@#1\endcsname\relax
\typeout{** WARNING: IEEEtran.bst: No hyphenation pattern has been}%
\typeout{** loaded for the language `#1'. Using the pattern for}%
\typeout{** the default language instead.}%
\else
\language=\csname l@#1\endcsname
\fi
#2}}
\providecommand{\BIBdecl}{\relax}
\BIBdecl

\bibitem{Levant2001}
A.~Levant, ``{Universal Single-Inupte Single-Output (SISO) Sliding-Mode
  Controllers With Finite-Time Convergence},'' \emph{IEEE Transactions on
  Automatic Control}, vol.~46, no.~9, pp. 1447 -- 1451, 2001.

\bibitem{levant2003higher}
------, ``Higher-order sliding modes, differentiation and output-feedback
  control,'' \emph{International journal of Control}, vol.~76, no. 9-10, pp.
  924--941, 2003.

\bibitem{levant2005homogeneity}
------, ``Homogeneity approach to high-order sliding mode design,''
  \emph{Automatica}, vol.~41, no.~5, pp. 823--830, 2005.

\bibitem{CRUZZAVALA2017232}
E.~Cruz-Zavala and J.~A. Moreno, ``{Homogeneous High Order Sliding Mode design:
  A Lyapunov approach},'' \emph{Automatica}, vol.~80, pp. 232 -- 238, 2017.

\bibitem{ding2016simple}
S.~Ding, A.~Levant, and S.~Li, ``Simple homogeneous sliding-mode controller,''
  \emph{Automatica}, vol.~67, pp. 22 -- 32, 2016.

\bibitem{zamora2013control}
C.~A. Zamora, J.~A. Moreno, and S.~Kamal, ``Control integral discontinuo para
  sistemas mec{\'a}nicos,'' in \emph{2013 Congreso Nacional de Control
  Autom{\'a}tico (CNCA AMCA)}.\hskip 1em plus 0.5em minus 0.4em\relax
  Asociaci{\'o}n de M{\'e}xico de Control Autom{\'a}tico (AMCA), 2013, pp.
  11--16.

\bibitem{chalanga2015new}
A.~Chalanga, S.~Kamal, and B.~Bandyopadhyay, ``A new algorithm for continuous
  sliding mode control with implementation to industrial emulator setup,''
  \emph{IEEE/ASME Transactions on Mechatronics}, vol.~20, no.~5, pp.
  2194--2204, 2015.

\bibitem{edwards2016adaptive}
C.~Edwards and Y.~Shtessel, ``Adaptive dual layer super-twisting control and
  observation,'' \emph{International Journal of Control}, vol.~89, no.~9, pp.
  1759--1766, 2016.

\bibitem{EDWARDS2016183}
C.~Edwards and Y.~B. Shtessel, ``Adaptive continuous higher order sliding mode
  control,'' \emph{Automatica}, vol.~65, pp. 183 -- 190, 2016.

\bibitem{kamal2015new}
S.~Kamal, A.~Chalanga, V.~Thorat, and B.~Bandyopadhyay, ``A new family of
  continuous higher order sliding mode algorithm,'' in \emph{2015 10th Asian
  Control Conference (ASCC)}.\hskip 1em plus 0.5em minus 0.4em\relax IEEE,
  2015, pp. 1--6.

\bibitem{CHL15}
S.~{Laghrouche}, M.~{Harmouche}, and Y.~{Chitour}, ``Higher order
  super-twisting for perturbed chains of integrators,'' \emph{IEEE Transactions
  on Automatic Control}, vol.~62, no.~7, pp. 3588--3593, 2017.

\bibitem{moreno2018discontinuous}
J.~A. Moreno, ``Discontinuous integral control for systems with relative degree
  two,'' in \emph{New perspectives and applications of modern control
  theory}.\hskip 1em plus 0.5em minus 0.4em\relax Springer, 2018, pp. 187--218.

\bibitem{mercado2020discontinuous}
J.~{\'A}. Mercado-Uribe and J.~A. Moreno, ``Discontinuous integral action for
  arbitrary relative degree in sliding-mode control,'' \emph{Automatica}, vol.
  118, p. 109018, 2020.

\bibitem{cruz2020higher}
E.~Cruz-Zavala and J.~A. Moreno, ``Higher order sliding mode control using
  discontinuous integral action,'' \emph{IEEE Transactions on Automatic
  Control}, vol.~65, no.~10, pp. 4316--4323, 2020.

\bibitem{seeber2021integral}
R.~Seeber and J.~A. Moreno, ``An integral extension technique for continuous
  homogeneous state-feedback control laws preserving nominal performance,''
  \emph{International Journal of Robust and Nonlinear Control}, vol.~31, no.~9,
  pp. 3480--3498, 2021.

\bibitem{mercado2021multiple}
A.~Mercado-Uribe, J.~A. Moreno, A.~Polyakov, and D.~Efimov, ``Multiple-input
  multiple-output homogeneous integral control design using the implicit
  lyapunov function approach,'' \emph{International Journal of Robust and
  Nonlinear Control}, vol.~31, no.~9, pp. 3417--3438, 2021.

\bibitem{levant2010chattering}
A.~Levant, ``Chattering analysis,'' \emph{IEEE transactions on automatic
  control}, vol.~55, no.~6, pp. 1380--1389, 2010.

\bibitem{10.1002/rnc.4347}
U.~P\`erez-Ventura and L.~Fridman, ``When is it reasonable to implement the
  discontinuous sliding-mode controllers instead of the continuous ones?
  frequency domain criteria,'' \emph{International Journal of Robust and
  Nonlinear Control}, vol.~29, no.~3, pp. 810--828, 2019.

\bibitem{boiko2008discontinuous}
I.~Boiko, \emph{Discontinuous control systems: frequency-domain analysis and
  design}.\hskip 1em plus 0.5em minus 0.4em\relax Springer Science \& Business
  Media, 2008.

\bibitem{Bartolini_IMA2013}
G.~Bartolini, A.~Levant, F.~Plestan, M.~Taleb, and E.~Punta, ``Adaptation of
  sliding modes,'' \emph{IMA Journal of Mathematical Control and Information},
  vol.~30, no.~3, pp. 885--300, 2013.

\bibitem{doi:10.1002/rnc.4105}
A.~Ferrara, G.~Incremona, and E.~Regolin, ``Optimization-based adaptive sliding
  mode control with application to vehicle dynamics control,''
  \emph{International Journal of Robust and Nonlinear Control}, vol.~29, no.~3,
  pp. 550--564, 2019.

\bibitem{doi:10.1002/rnc.4253}
L.~Hsu, T.~R. Oliveira, J.~S. Cunha, and L.~Yan, ``Adaptive unit vector control
  of multivariable systems using monitoring functions,'' \emph{International
  Journal of Robust and Nonlinear Control}, vol.~29, no.~3, pp. 583--600, 2019.

\bibitem{Mor2016ACTA}
J.~Moreno, D.~Negrete, V.~Torres-Gonz{\'a}lez, and L.~Fridman, ``Adaptive
  continuous twisting algorithm,'' \emph{International Journal of Control},
  vol.~89, no.~9, pp. 1798--1806, 2016.

\bibitem{OBEID2018540}
H.~Obeid, L.~M. Fridman, S.~Laghrouche, and M.~Harmouche, ``Barrier
  function-based adaptive sliding mode control,'' \emph{Automatica}, vol.~93,
  pp. 540 -- 544, 2018.

\bibitem{OBEID2019STC}
H.~{Obeid}, S.~{Laghrouche}, L.~{Fridman}, Y.~{Chitour}, and M.~{Harmouche},
  ``Barrier function-based variable gain super-twisting controller,''
  \emph{IEEE Transactions on Automatic Control}, vol. DOI:
  10.1109/TAC.2020.2974390, 2020.

\bibitem{obeid2021dual}
H.~Obeid, S.~Laghrouche, and L.~Fridman, ``Dual layer barrier functions based
  adaptive higher order sliding mode control,'' \emph{International Journal of
  Robust and Nonlinear Control}, vol.~31, no.~9, pp. 3795--3808, 2021.

\bibitem{rodrigues2021adaptive}
V.~H.~P. Rodrigues, L.~Hsu, T.~R. Oliveira, and L.~Fridman, ``Adaptive sliding
  mode control with guaranteed performance based on monitoring and barrier
  functions,'' \emph{International Journal of Adaptive Control and Signal
  Processing}, 2021.

\bibitem{Plestan2010}
F.~Plestan, Y.~Shtessel, V.~Bregeault, and A.~Poznyak, ``New methodologies for
  adaptive sliding mode control,'' \emph{International Journal of Control},
  vol.~83, pp. 1907 -- 1919, 2010.

\bibitem{Plestan2012}
------, ``Sliding mode control with gain adaptation-application to an
  electropneumatic actuator,'' \emph{Control Engineering Practice}, vol.~21,
  no.~5, pp. 679 -- 688, 2013.

\bibitem{yan2016adaptive}
F.~Plestan, X.~Yan, M.~Taleb, and A.~Estrada, \emph{Adaptive solutions for
  robust control of electropneumatic actuators}.\hskip 1em plus 0.5em minus
  0.4em\relax IET, 2016, pp. 387--406.

\bibitem{Shtessel}
Y.~Shtessel, M.~Taleb, and F.~Plestan, ``A novel adaptive-gain super-twisting
  sliding mode controller: methodology and application,'' \emph{Automatica},
  vol.~48, no.~5, pp. 759--769, 2012.

\bibitem{Oliveira2018}
T.~R. Oliveira, J.~P. V.~S. Cunha, and L.~Hsu, \emph{Adaptive Sliding Mode
  Control Based on the Extended Equivalent Control Concept for Disturbances
  with Unknown Bounds}, 2018, vol. 115, pp. 149--163.

\bibitem{utkin2013adaptive}
V.~I. Utkin and A.~S. Poznyak, ``Adaptive sliding mode control with application
  to super-twist algorithm: Equivalent control method,'' \emph{Automatica},
  vol.~49, no.~1, pp. 39--47, 2013.

\bibitem{laghrouche2021barrier}
S.~Laghrouche, M.~Harmouche, Y.~Chitour, H.~Obeid, and L.~M. Fridman, ``Barrier
  function-based adaptive higher order sliding mode controllers,''
  \emph{Automatica}, vol. 123, p. 109355, 2021.

\bibitem{HLC2017}
S.~{Laghrouche}, M.~{Harmouche}, and Y.~{Chitour}, ``Higher order
  super-twisting for perturbed chains of integrators,'' \emph{IEEE Transactions
  on Automatic Control}, vol.~62, no.~7, pp. 3588--3593, 2017.

\bibitem{Hong}
Y.~Hong, ``Finite-time stabilization and stabilizability of a class of
  controllable systems,'' \emph{Systems and Control Letters}, vol.~46, no.~4,
  pp. 231--236, 2002.

\end{thebibliography}
\end{document}